\newtheorem{thm}{Theorem}[section]
\newtheorem{prop}[thm]{Proposition}
\newtheorem{lem}[thm]{Lemma}
\newtheorem{cor}[thm]{Corollary}
\newtheorem{rem}[thm]{Remark}
\newtheorem{defn}[thm]{Definition}
\newtheorem{obs}[thm]{Observation}
\theoremstyle{definition}
\tikzstyle{none}=[inner sep=0mm]
\tikzstyle{whitevertex}=[fill=white, draw=black, shape=circle]
\tikzstyle{blackvertex}=[fill=black, draw=black, shape=circle]
\tikzstyle{dgreyvertex}=[fill={rgb,255: red,128; green,128; blue,128}, draw=black, shape=circle]
\tikzstyle{lgreyvertex}=[fill={rgb,255: red,191; green,191; blue,191}, draw=black, shape=circle]
\tikzstyle{thick}=[-, thick]
\tikzstyle{light}=[-, draw={rgb,255: red,191; green,191; blue,191}]
\title{Bounds on the propagation radius in power domination}
\author[1,3]{Imran Allie\footnote{imran.allie@uct.ac.za, corresponding author}}
\author[1,3]{Brandon du Preez \footnote{brandon.dupreez@uct.ac.za}}
\author[1,3]{Dean Reagon \footnote{RGNDEA001@myuct.ac.za}}
\author[2,3]{Adriana Roux \footnote{rianaroux@sun.ac.za}}
\affil[1]{Department of Mathematics and Applied Mathematics, University of Cape Town, Rondebosch, Cape Town, 7700, South Africa}
\affil[2]{Department of Mathematical Sciences, Stellenbosch University, Stellenbosch, 7600, South Africa}
\affil[3]{DSI-NRF Centre of Excellence in Mathematical and Statistical Sciences (CoE-MaSS), South Africa}
\begin{document}
	
	\maketitle
	
	\begin{abstract}

		Let $G$ be a graph and let $S \subseteq V(G)$. It is said that $S$ \textit{dominates} $N[S]$. We say that $S$ \textit{monitors} vertices of $G$ as follows. Initially, all dominated vertices are monitored. This step is called the \textit{domination} step. Thereafter, the set of unmonitored vertices of which each is the only unmonitored neighbour of a monitored vertex, is monitored. 
		This step is called a \textit{propagation} step and is repeated until the process terminates. The process terminates when the there are no monitored vertices which have exactly one unmonitored neighbour. This combined process of initial domination and subsequent propagation is called \textit{power domination}. If all vertices of $G$ are monitored at termination, then $S$ is said to be a \textit{power dominating set (PDS) of $G$}. The \textit{power domination number of $G$}, denoted as $\gamma_p(G)$, is the minimum cardinality of a PDS of $G$. The \textit{propagation radius of $G$} is the minimum number of steps it takes a minimum PDS to monitor $V(G)$. In this paper we determine an upper bound on the propagation radius of $G$ with regards to power domination, in terms of $\delta$ and $n$. We show that this bound is only attained when $\gamma_p(G)=1$ and then improve this bound for $\gamma_p(G)\geq 2$. Sharpness examples for these bounds are provided. We also present sharp upper bounds on the propagation radius of split graphs. We present sharpness results for a known lower bound of the propagation radius for all $\Delta\geq 3$.
		
	\end{abstract}

	\section{Introduction}{\label{Introduction}}
	
	\textit{Power domination theory}, which has received some attention in the literature in the past decade, is a derivative of the much studied branch of graph theory known as domination theory and has applications in the monitoring of electrical networks. In electrical networks, \textit{Phaser Measurement Units (PMUs)} are placed at nodes in the network for the purpose of monitoring the remainder of the nodes. For cost reasons, it is desirable to minimize the number of PMUs in the network, whilst it still being the objective to monitor the entire network. Graph theoretically, this translates to the problem of minimizing the \textit{power domination number} of a graph (denoted as $\gamma_p(G)$ where $G$ is the graph), with the power domination number corresponding to the number of PMUs placed at nodes in the network. Various results have been proven in the literature regarding the power domination number of graphs. See for example \cite{pdgg_dorfling_henning_2006, vvh_2018}. 
	
	Of practical concern is not just the number of PMUs placed in an electrical network, but also the time taken for the set of PMUs to completely monitor the network. Letting the network be a graph $G$ and the set of nodes which have PMUs being our power dominating set $S$, the monitoring of other vertices occurs by an initial domination step (as domination is typically defined in graph theory), and then a well-defined step-by-step process called \textit{propagation}. The time taken for the set of PMUs to monitor the network is measured by the number of steps required, as per the well-defined propagation process. The minimum number of steps required for a minimum power dominating set to monitor the graph is called the \textit{propagation radius}, denoted as $rad_p(G)$ (formal definitions to follow). We note that some authors have referred to the propagation radius instead as propagation time. Not much is present in the literature regarding the propagation radius. In \cite{pdbtc_liao_2016}, however, the power domination of graphs are considered where the radius is restricted, and in \cite{prop_time_2012} the propagation time with regards to \textit{zero forcing} is investigated (zero forcing has the same propogation steps as power domination, but do not have an initial domination step.).  
	
	In this paper, we investigate bounds on the propagation radius in graphs. We determine an upper bound on $rad_p(G)$ in terms of $\delta$ and $n$ and present sharpness examples. For this upper bound, sharp example graphs must have $\gamma_p(G) = 1$. We then determine further bounds in terms $\delta$, $n$ and $\gamma_p(G)$ where $\gamma_p(G) \geq 2$, and present sharpness examples for some cases. We also consider \textit{split graphs}, where we determine a sharp upper bound on the propagation radius in terms of $n$ and $\gamma_p(G)$. We present a sharp lower bound of $rad_p(G)$ in terms of $n$, $\Delta$ and $\gamma_p(G)$.

	\section{Preliminaries} 
	
	\begin{defn} \label{def_powerdomination} {\rm
			Let $G$ be a graph and let $S \subseteq V(G)$. We denote by $P^i_G(S)$ the set of vertices \textit{monitored by $S$ in $i$ steps} for $i \geq 1$. We define $P^i_G(S)$ recursively as follows:
			\begin{itemize}
				\item $P^1_G(S) = N[S]$.
				\item For $i \geq 2$, let $V^i = \{v \in P^i_G(S) : |N[v] \setminus P^i_G(S)| \leq 1 \}$. Then $P^{i+1}_G(S) = P^i_G(S) \cup N[V^i]$. 
		\end{itemize}}
	\end{defn}
	
	The step from $S$ to $P^1_G(S) = N[S]$ is called the \textit{domination step}. For $i \geq 2$, the step from $P^{i-1}_G(S)$ to $P^{i}_G(S)$ is called a \textit{propagation step}. If there is no confusion, we will simply use $P^i(S)$ or $P^i$. For $v \in P^i$ such that $v$ has exactly one unmonitored neighbour $u$, we say that $v$ \textit{propagates to} $u$ at step $i+1$. The sequence $P^i_G(S)_{i \geq 1}$ is called a \textit{propagation sequence}. Clearly, $P^i_{i \geq 1}$ is such that $P^i \subseteq P^{i+1}$ and that if $P^i = P^{i+1}$, then $P^i$ is a fixed point of the sequence. If $i$ is the smallest integer such that $P^i = P^{i+1}$ then we denote $P^i$ as $P^{\infty}$. We say that $S$ \textit{power dominates} $P^{\infty}$. $S$ is a \textit{power dominating set of $G$} (or a \textit{PDS of $G$}, for brevity) if $P^{\infty} = V(G)$. The \textit{power domination number of $G$}, denoted $\gamma_p(G)$, is the minimum cardinality of a PDS of $G$. 
	The following definition will also be useful for our purposes.

	\begin{defn} \label{def_live_set}
		For $i>1$, the \textit{live set} of $P^i_G(S)$, denoted $L^i_G(S)$, is the subset of vertices in $P^i_G(S)$ which have not propagated. 
		We define the live set of $P^1_G(S)$ to be $L^i_G(S)=N(S)\setminus S$.
		Since any vertex can propagate to at most one vertex, we have that the sequence $|L^1|, |L^2|, \dots $ is non-increasing.
	\end{defn}
	
	\begin{defn}  \label{def_propagation_radius} {\rm
			Let $G$ be a graph and let $S$ be a PDS of $G$. The \textit{propagation radius of $G$ with respect to $S$}, denoted by $rad_p(G,S)$, is defined as $rad_p(G,S) = \min\{i : P^i_G(S) = V(G)\}$. The \textit{propagation radius of $G$}, denoted by $rad_p(G)$, is defined as $rad_p(G) = \min\{rad_p(G,S) : S~\text{is a PDS of}~G\}$. }		
	\end{defn}
	
	We say that a set $S\subseteq V$ is an \textit{optimal} PDS if $|S| = \gamma_p(G)$ and $rad_p(G,S) = rad_p(G)$.
	
	Let $S$ be a subset of $V(G)$, and suppose $u\in S$. 
	A vertex $v$ is an $S$-\textit{private neighbour} of $u$ if $v$ is adjacent to $u$, but is not adjacent to any other vertex of $S$. 
	Further, $v$ is an \textit{external} $S$-private neighbour if $v\not\in S$.
	
	Zero forcing is a process which is very similar to power domination. It was first defined in \cite{zeroforcing}. For our purposes we make use of some known results on zero forcing. Let $G$ be a graph and let $S \subseteq V(G)$. We term the vertices in $S$ as \textit{monitored} and the vertices in $V(G)\setminus S$ as \textit{unmonitored}. The \textit{forcing rule} is as follows: If $v \in V(G)$ is monitored and has exactly one unmonitored neighbour $u$, then $v$ \textit{forces} $u$, denoted by $v \rightarrow u$, so that $u$ becomes monitored. $S$ is a \textit{zero forcing set of $G$} (or a \textit{ZFS of $G$}, for brevity) if repeated application of the forcing rule on $S$ in $G$ eventually results in all vertices in $G$ being monitored. Clearly, zero forcing is identical to power domination except that the initial domination step of power domination is not present in zero forcing. It follows that if $S$ is a PDS of $G$ then $N[S]$ is a ZFS of $G$. 
	
	For a given ZFS of $G$, say $S$, we may list each single force which occurs chronologically until all of $V(G)$ is monitored. Such a chronological list of forces is called a \textit{forcing sequence of $G$ with respect to $S$}. We note that for a given ZFS of $G$, there may be multiple possible forcing sequences. Let $T$ be a forcing sequence of $G$ with respect to $S$, a ZFS of $G$. A \textit{forcing chain with respect to $T$} is a sequence of vertices $v_0, \dots, v_k$ such that $v_i \rightarrow v_{i+1}$ is listed in $T$ for each $i \in \{0,\dots,k-1\}$. A \textit{maximal forcing chain with respect to $T$} is a forcing chain with respect to $T$ which is not a proper subsequence of some other forcing chain with respect to $T$. We note that any $v \in S$ which does not force any other vertex in $T$, is considered a forcing chain on its own. Since it is not properly contained in any other forcing chain, it is furthermore a maximal forcing chain. A \textit{reversal} of $S$ is a set of terminal vertices of all the maximal forcing chains with respect to some forcing sequence. 
	
	\begin{lem}[\cite{zf_bar_2010}]
		\label{lem:reversal}
		Let $G$ be a graph and let $S$ be a ZFS. Then every reversal $Z$ of $S$ is a ZFS of $G$. Furthermore, $S$ is a reversal of $Z$.
	\end{lem}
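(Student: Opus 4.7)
Fix a forcing sequence $T$ of $S$ in $G$, and let $C_1, \ldots, C_m$ be its maximal forcing chains, with $C_j = (v^j_0, v^j_1, \ldots, v^j_{k_j})$, $v^j_0 \in S$ and $v^j_{k_j} \in Z$. Since every vertex of $G$ lies in a unique maximal chain, the $C_j$ partition $V(G)$ and $|S| = |Z| = m$. The plan is to construct a candidate reverse sequence $T'$ starting from $Z$ by listing, in reverse chronological order of $T$, the reversed forces $v^j_{i+1} \to v^j_i$ for each original force $v^j_i \to v^j_{i+1}$. If $T'$ turns out to be a valid forcing sequence, then it has $|V(G)| - |Z|$ forces and starts from $Z$, so it exhausts $V(G)$ and $Z$ is a ZFS. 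Moreover, the maximal chains of $T'$ will be precisely the reversed chains $(v^j_{k_j}, \ldots, v^j_0)$, whose terminals form $S$, yielding the second assertion.

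The validity of $T'$ is verified by induction along its order. The inductive hypothesis is that just before the reverse-force corresponding to the original force $u \to v$ at original time $s$, the monitored set is $Z$ together with every vertex that acted as a forcer in $T$ at some original time strictly greater than $s$. Two things must be checked. First, the forcer $v$ must already be monitored: either $v \in Z$, or else $v$ is non-terminal in its chain, in which case $v$ itself forces its chain successor at some time $s' > s$ in $T$, so $v$ lies in the monitored set by the inductive hypothesis. Second, every neighbor $w \ne u$ of $v$ must already be monitored.

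This second condition is the heart of the argument, and I would establish it by contradiction. Suppose some neighbor $w \ne u$ of $v$ is still unmonitored; then $w \notin Z$ and $w$ has not yet been processed as a reverse forcer, so $w$ acts as a forcer in $T$ at some time $s''$ with $s'' \le s$, and in fact $s'' < s$ since $u$ is the unique forcer at time $s$ and $w \ne u$. But at original time $s''$, validity of $w$'s force requires every neighbor of $w$ other than its target to be monitored, and $v$ is a neighbor of $w$ which was not monitored until time $s > s''$; hence $v$ must be the vertex $w$ forced at time $s''$. Uniqueness of the time at which $v$ is forced then gives $s'' = s$, contradicting $s'' < s$. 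The main obstacle is keeping the bookkeeping between the original and reversed orderings consistent with the chain partition and extracting exactly what the validity of each original force in $T$ gives us about the reverse process; once the chain partition is firmly in hand, the contradiction closes in a couple of lines.
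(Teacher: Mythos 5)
Your proposal is correct, but it differs from the paper in an important structural way: the paper does not prove Lemma~\ref{lem:reversal} at all --- its ``proof'' is a pointer to the proof of Theorem~2.6 in \cite{zf_bar_2010}, where the reversal result is established. What you have written is a complete, self-contained argument (essentially a reconstruction of the standard one): you use that the maximal forcing chains of a fixed forcing sequence $T$ partition $V(G)$, run the reversed forces in reverse chronological order, and maintain the invariant that the monitored set equals $Z$ together with all original forcers at times strictly later than the current one; the key contradiction (an unmonitored neighbour $w\neq u$ of $v$ would have had to force $v$ at an earlier time, forcing $v$ twice) is exactly right, and your identification of the terminals of the reversed chains with $S$ gives the second assertion. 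Two small points you rely on implicitly and could state explicitly: each vertex forces at most once (after $v$ forces, all of $N(v)$ is monitored, so $v$ never forces again), which is what makes the chains vertex-disjoint paths and makes ``$Z\cup\{\text{forcers}\}=V(G)$'' a disjoint cover; and the chain starts are precisely $S$ because $T$ is a complete forcing sequence, so every vertex outside $S$ is forced exactly once. With those remarks added, your argument buys a self-contained proof where the paper only offers a citation; the paper's approach buys brevity and defers to the published source.
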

	\begin{proof}
		In the proof of Theorem 2.6 of \cite{zf_bar_2010}, it was shown that every reversal $Z$ of a ZFS is also a ZFS. Further, the authors show that $S$ is a reversal of $Z$.
	\end{proof}

	If there is no confusion, then we will simply refer to a forcing chain instead of a forcing chain with respect to $T$. 
	
	\begin{figure} [h!]
		\begin{center}	
			\begin{tikzpicture} [every node/.style={draw,shape=circle,fill=black,text=white,scale=0.8},scale=1]
				\node (0) at (-2, 1.5) [label={[color=black]:$a$}] {};
				\node (1) at (-2, -1.5) [label={[color=black]:$b$}] {};
				\node [fill=none] (2) at (-1, 0.75) [label={[color=black]:$c$}] {};
				\node [fill=none] (3) at (-1, -0.75) [label={[color=black]:$d$}] {};
				\node [fill=none] (4) at (0, 0) [label={[above,color=black]:$e$}] {};
				\node [fill=none] (5) at (1.25, 0) [label={[above,color=black]:$f$}] {};
				\draw (0) -- (2) -- (4) -- (5);
				\draw (1) -- (3) -- (4);
			\end{tikzpicture}
		\end{center}
		\caption{A graph $G$ with $\{a,b\}$ being both a PDS and ZFS.} \label{fig_example}
	\end{figure}
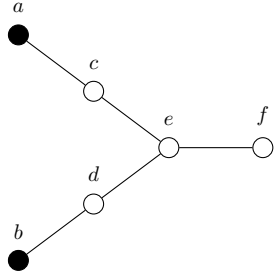
	
	Figure \ref{fig_example} serves to provide clarity on the definitions in this paragraph. In the graph $G$, the set $S = \{a,b\}$ is a PDS. 
	We have that $a$ dominates $c$ and $b$ dominates $d$, then $c$ propagates to $e$ and $d$ propagates to $e$, after which $e$ propagates to $f$. 
	The propagation sequence is $P^1 = \{a,b,c,d\}, P^2 = \{a,b,c,d,e\}, P^3 = P^{\infty} = V(G)$. 
	Note that both $c$ and $d$ are considered to propagate to $e$. The live sets at every step are $L^1 = \{c,d\}$, $L^2 = \{e\}$, and $L^3 = \{f\}$.
	Note that $S$ is also a ZFS. There are two possible forcing sequences with respect to $S$: $T_1 = \{a \rightarrow c, b \rightarrow d, c \rightarrow e, e \rightarrow f\}$ and $T_2 = \{a \rightarrow c, b \rightarrow d, d \rightarrow e, e \rightarrow f\}$. With respect to $T_1$: we have maximal forcing chains $a \rightarrow c \rightarrow e \rightarrow f$ and $b \rightarrow d$; and we have a reversal $\{f,d\}$ which is then also a ZFS. With respect to $T_2$: we have maximal forcing chains $b \rightarrow d \rightarrow e \rightarrow f$ and $a \rightarrow c$; and we have a reversal $\{f,c\}$ which is then also a ZFS.

	\section{Upper bounds when $\bm{\gamma_p = 1}$}
	
	\label{sec:upp_gam_1}
	
	Let $G$ be a graph and let $S$ be a $\gamma_p$-set of $G$. Consider the sequence of propagations in $G$ that begins with $S$. 
	In the first step, the set $N[S]$ is monitored.
	Each propagation step after that monitors at least one new vertex.
	Thus, we make the following observation: If $G$ is a graph with order $n$ and $S$ a $\gamma_p$-set of $G$ that monitors $G$ in $rad_p(G)$ steps, then $n \geq |N[S]| + (rad_p(G) - 1)$. Therefore, $rad_p(G) \leq n - |N[S]|+ 1$. 
	
	\begin{obs} \label{obs}
		Let $G$ be a graph and let $S$ be a $\gamma_p$-set of $G$. Then $rad_p(G) \leq n - |N[S]|+ 1$. 
	\end{obs}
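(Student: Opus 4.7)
The plan is to exploit the growth of the propagation sequence $P^1 \subseteq P^2 \subseteq \dots \subseteq P^{\infty} = V(G)$ generated by the given $\gamma_p$-set $S$. I would first record the two endpoints of the sequence: by Definition \ref{def_powerdomination}, $|P^1_G(S)| = |N[S]|$, and since $S$ is (in particular) a PDS, $|P^{\infty}_G(S)| = n$. The key intermediate observation is that at every non-terminal step the containment is strict: for each $i$ with $P^i_G(S) \ne V(G)$, the sequence has not yet reached its fixed point, so $P^{i+1}_G(S) \supsetneq P^i_G(S)$, which forces $|P^{i+1}_G(S)| \geq |P^i_G(S)| + 1$.

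A short induction on $i$ then gives $|P^i_G(S)| \geq |N[S]| + (i-1)$ for every $1 \leq i \leq rad_p(G,S)$. Specialising to $i = rad_p(G,S)$ yields $n \geq |N[S]| + (rad_p(G,S) - 1)$, which rearranges to $rad_p(G,S) \leq n - |N[S]| + 1$. Because $S$ is in particular a PDS of $G$, Definition \ref{def_propagation_radius} gives $rad_p(G) \leq rad_p(G,S)$, and the bound follows.

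I do not expect any real obstacle: the proof is essentially the pigeonhole argument sketched in the paragraph preceding the statement, and the only minor point requiring care is justifying the strict growth at non-terminal steps, which is immediate from the definition of $P^{\infty}$ combined with the hypothesis that $S$ power dominates $G$.
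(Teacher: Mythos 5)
Your argument is correct and is essentially the paper's own proof: the paper obtains the bound by the same counting observation that $|P^1| = |N[S]|$ and each subsequent propagation step monitors at least one new vertex, giving $n \geq |N[S]| + (rad_p(G)-1)$. Your only addition is the explicit induction and the step $rad_p(G) \leq rad_p(G,S)$, which merely make the sketch rigorous.
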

	
	\begin{prop} \label{prop_attainobsbound}
		Let $G$ be a graph with order $n$ and minimum degree $\delta$. Then $rad_p(G) \leq n - \delta$. Moreover, if $G = K_n$ or $G = K_n - M$ where $M$ is a perfect matching, then $rad_p(G) = n-\delta$.
		\begin{proof}
			Let $S$ be a $\gamma_p$-set of $G$. Since $ \delta + 1 \leq |N[S]|$, we have that $rad_p(G) \leq n - (\delta + 1) + 1 = n - \delta$, by Observation \ref{obs}. Let $G = K_n$. Then $\gamma_p(G) = 1$, $\delta = n-1$ and $rad_p(G) = 1 = n - (n-1) = n - \delta$. Let $G = K_n - M$ where M is a perfect matching. In this case $n$ must be even and $\delta = n-2$. Let	$v \in V(G)$ and let $S = \{v\}$. Then $N[v] = V(G)\setminus \{u\}$ where $uv \in M$. There is now only one vertex that is unmonitored, so it is monitored in the next step. 
			Then $S$ is a $\gamma_p$-set of $G$ and $rad_p(G) = 2 = n - (n-2) = n - \delta$.
		\end{proof}
	\end{prop}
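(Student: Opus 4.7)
The plan is to derive the upper bound as an immediate consequence of Observation \ref{obs}, and then verify sharpness by exhibiting a suitable single-vertex PDS in each of the two named graph families. For the bound, I would take any $\gamma_p$-set $S$ of $G$ and pick any $v\in S$; since $v$ has at least $\delta$ neighbours, $|N[S]|\geq |N[v]|\geq \delta+1$. Plugging this into $rad_p(G)\leq n-|N[S]|+1$ from Observation \ref{obs} yields $rad_p(G)\leq n-\delta$.

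For the first sharpness case $G=K_n$, I would note that any single vertex dominates all of $V(G)$ in one step, so $\gamma_p(G)=1$ and $rad_p(G)=1$, while $\delta=n-1$, giving $n-\delta=1$. For the second case $G=K_n-M$, $n$ must be even and $\delta=n-2$. I would fix a single vertex $v$ and let $u$ be its unique non-neighbour (via the matching edge $uv\in M$). Then $N[v]=V(G)\setminus\{u\}$, so after the domination step only $u$ is unmonitored; since $u$ has many monitored neighbours, each with $u$ as its only unmonitored neighbour, $u$ is monitored in the propagation step. Thus $\{v\}$ is a PDS with $rad_p(G,\{v\})=2$. Combined with the upper bound $rad_p(G)\leq n-\delta=2$, this gives equality.

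The main (minor) obstacle is making sure the sharpness argument for $K_n-M$ treats $rad_p$ correctly: one must confirm that $\{v\}$ is actually a $\gamma_p$-set (which is automatic since $\gamma_p\geq 1$ and we exhibit a PDS of size $1$) and that the radius achieved with respect to this particular PDS equals the global minimum. Since the upper bound from Observation \ref{obs} already forces $rad_p(G)\leq 2$, and we have exhibited a PDS achieving $rad_p(G,S)=2$ with $|S|=\gamma_p(G)$, no further minimization over PDSs is needed. Beyond this, the proposition is essentially a one-line deduction from the preceding observation followed by two short verifications.
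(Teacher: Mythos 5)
Your proposal follows the paper's proof essentially verbatim: the upper bound is the same one-line consequence of Observation \ref{obs} together with $|N[S]|\geq\delta+1$, and the sharpness cases use the same singleton PDSs in $K_n$ and $K_n-M$.

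There is, however, one logical slip in your justification of equality for $G=K_n-M$. You argue that the upper bound $rad_p(G)\leq n-\delta=2$ together with exhibiting a $\gamma_p$-set $S=\{v\}$ with $rad_p(G,S)=2$ ``gives equality, so no further minimization over PDSs is needed.'' That inference is backwards: $rad_p(G)$ is the \emph{minimum} of $rad_p(G,S')$ over minimum PDSs $S'$, so an upper bound plus one set attaining radius $2$ does not rule out some other singleton attaining radius $1$. What is actually needed is the lower bound $rad_p(G)\geq 2$, which holds because every vertex of $K_n-M$ has degree $n-2$, so no singleton dominates all of $V(G)$ in the initial step (equivalently, by vertex-transitivity every singleton behaves like $\{v\}$ and has radius exactly $2$). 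With that one-line addition your argument is complete and coincides with the paper's.
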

	
	\begin{thm}
		\label{thm:delta_1_up_bound}
		Let $G$ be a graph such that $G \neq K_n$ and $G \neq K_n - M$ where $M$ is a perfect matching. Then $rad_p(G) \leq n-\delta(G)-1$. 
		\begin{proof}
			For any $G$ we have that $n \geq \delta + 1$. 
			If $n=\delta + 1$ then $G = K_n$. If $n=\delta + 2$ then either $\Delta = \delta$ or $\Delta = \delta + 1$. If the former, then $G = K_n - M$ where $M$ is a perfect matching. If $\Delta = \delta + 1$, then $\Delta(G)=n-1$ and $rad_p(G)=1$, in which case we have $rad_p(G)=1=n-\delta(G)-1$. For the rest of the proof we may assume that $n \geq \delta + 3$.
			
			Let $S$ be a $\gamma_p$-set of $G$ such that $rad_p(G,S) = rad_p(G)$. We assume to the contrary that $rad_p(G) > n-\delta(G)-1$. By Proposition \ref{prop_attainobsbound}, we know that $rad_p(G) \leq n-\delta$. Therefore, $rad_p(G) = n-\delta$. It follows that $S = \{v\}$ for some $v$ with degree $\delta$. It follows as well that exactly one vertex is monitored at each step except for the initial domination step. That is, $|P^i \setminus  P^{i-1}| = 1$ for each $i \geq 2$. 
			Since $n \geq \delta+3$ and $rad_p(G) = n-\delta$, we also have that $rad_p(G) = n - \delta \geq \delta + 3 - \delta = 3$. 
			Recall as well that $N[v]$ is a ZFS. Let $T = \{p_1 \rightarrow x_1, \dots, p_k \rightarrow x_k\}$ be a forcing sequence with respect to $N[S]$. Note that no two of these forces occur simultaneously, so that these forces coincide with the sequence of propagations. 
			That is $P^i \setminus P^{i-1}=\{x_{i-1}\}$. We have $p_i \in L^i$ for each $i$ and $N(v) = L^1$. 
			
			\emph{Claim 1}: $|L^i| = \delta$ for all $i \in \{1, ..., k\}$ and $L^k=N(x_k)$. 
			
			Since $L^1=N(S)\setminus S$, and $|N(S)|=\delta$, we must have $|L^1|=\delta$. Since $x_k$ is the last vertex to be monitored, we must have that all of its neighbours propagate to it. Then since it has at least $\delta$ neighbours and all of them are in $L^{k}$ we have $|L^{k}| \geq \delta$. Since $|L^i|$ is non-increasing we have that $|L^i| = \delta$ for all $i \in \{1,\dots, k\}$.
			
			\emph{Claim 2}: For any forcing sequence of $N[S]$, the terminal vertex of every maximal forcing chain is in $N[x_k] \cup \{v\}$.
			
			Note that $v$ is its own maximal forcing chain.
			Assume to the contrary that $u \notin N[x_k] \cup \{v\}$ is the terminal vertex of a maximal forcing chain. 
			It follows from Claim 1 that $u$ is not in $L^k$.
			Let $i$ be the largest integer such that $u \in L^i$. There must exist some $w \in L^i$ that propagates to some vertex in $P^{i+1}$ such that $w \neq u$. Then $L^{i+1} \subseteq (L^i  \setminus  \{u,w\}) \cup \{x_{i+1}\}$. But then $|L^{i+1}| < |L^i|$ which contradicts Claim 1.
			
			\emph{Claim 3}: The set $\{x_k\}$ is a PDS of $G$.
			
			Since $\{v\}$ is a power dominating set, we can say that $N_G(v)$ is a ZFS of $G-v$ and that by Claim 2, the terminal vertex of every maximal forcing chain is in $N[x_k]$. Let $Z'$ be a reversal of $N_G(v)$ in $G-v$. Then $Z' \subseteq N[x_k]$. We also know that any reversal of $N_G(v)$ in $G-v$ is a ZFS of $G-v$. Therefore, $Z'$ is a ZFS of $G-v$. 
			By Lemma \ref{lem:reversal} we have that $N_G(v)$ is a reversal of $Z'$ in $G-v$. 
			By definition of a reversal, $N_G(v)$ is the set of the terminal vertices of maximal forcing chains of some forcing sequence $T'$ of $Z'$ in $G-v$. It follows that $Z'$ monitors at least $V(G)\setminus\{v\}$ in $G$. Then, since only $v$ is unmonitored in $G$ by $Z'$, it is monitored in the next step. Since $Z' \subseteq N[x_k]$, $N[x_k]$ is a ZFS of $G$, which implies that $\{x_k\}$ is a PDS of $G$.
			
			\emph{Claim 4}: $x_{k-1}$ is adjacent to $x_k$.
			
			Assume to the contrary that $x_{k-1}$ is not adjacent to $x_k$. Then, at the second last propagation step in which $p_{k-1}$ forces $x_{k-1}$, all the neighbours of $x_k$ are already monitored, implying that $p_{k-1}$ forces $x_{k-1}$ and $p_{k}$ forces $x_{k}$ in the same propagation step, a contradiction. 
			
			By Claim 4, $x_{k-1}$ propagates to $x_k$, so we may choose $p_k$ such that $p_k = x_{k-1}$. If any vertex in $N(x_k) \setminus \{x_{k-1}\}$ is not adjacent to $x_{k-1}$, it will propagate to $x_k$ before the last step, contradicting the propagation radius of G. Therefore $N[x_k] \subseteq N[x_{k-1}]$ and by Claim 3 it follows that $\{x_{k-1}\}$ is a power dominating set of $G$. Recall that $x_k$ is monitored after $x_{k-1}$. Then $p_{k-1}$ cannot be adjacent to $x_k$ otherwise it would be adjacent to two unmonitored vertices, $x_k$ and $x_{k-1}$, when it propagates to $x_{k-1}$. Therefore $|N[x_{k-1}]| \geq |N[x_k]| + |\{p_{k-1}\}| = (\delta + 1) + 1$. Then $rad_p(G, \{x_{k-1}\}) \leq n-|N[x_{k-1}]|+1 \leq n-\delta-2+1 = n-\delta-1$. But this contradicts that $rad_p(G,S) = rad_p(G)$. Therefore $rad_p(G) \neq n-\delta$ implying that $rad_p(G) \leq n - \delta - 1$.
			
		\end{proof}
	\end{thm}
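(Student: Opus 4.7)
The plan is a proof by contradiction that extracts rigid structure from the extremal case and then uses a reversal of forcing to exhibit a PDS with smaller propagation radius. First I would dispose of the small cases: when $n=\delta+1$ we must have $G=K_n$, and when $n=\delta+2$ either every vertex has degree $\delta$ (so $G=K_n-M$) or some vertex has degree $n-1$, in which case $rad_p(G)=1=n-\delta-1$. So I may assume $n \geq \delta + 3$ and suppose, toward contradiction, that $rad_p(G) > n-\delta-1$; together with Proposition~\ref{prop_attainobsbound} this pins $rad_p(G)=n-\delta$.

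Under this assumption, Observation~\ref{obs} forces $|N[S]| \leq \delta+1$ for any optimal PDS $S$, so $S=\{v\}$ for some vertex $v$ of degree $\delta$, and each propagation step monitors exactly one new vertex. The propagation therefore linearly orders $V(G) \setminus N[v]$ as $x_1,\dots,x_k$ with unique forcers $p_i \to x_i$, giving a forcing sequence of $N_G(v)$ as a ZFS of $G-v$. A short counting argument yields that the live set has constant size $\delta$ throughout: the last vertex $x_k$ needs all of its $\geq \delta$ neighbours to be in the final live set, while live-set sizes are non-increasing. This rigidity forces every maximal forcing chain to end inside $N[x_k] \cup \{v\}$, since any termination elsewhere would cause the live set to drop at that step.

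The core step is to apply Lemma~\ref{lem:reversal} inside $G-v$: a reversal $Z'$ of $N_G(v)$ lies in $N[x_k]$, so $N[x_k]$ is a ZFS of $G$ and hence $\{x_k\}$ is a PDS of $G$. A simultaneity argument then forces $x_{k-1}$ adjacent to $x_k$ (otherwise all of $N(x_k)$ would already be monitored at the penultimate step, collapsing two propagations into one). This gives $N[x_k] \subseteq N[x_{k-1}]$, else some neighbour of $x_k$ would propagate to $x_k$ prematurely, so $\{x_{k-1}\}$ is also a PDS; furthermore $p_{k-1}$ cannot be adjacent to $x_k$, lest it see two unmonitored neighbours when it forces. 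Hence $|N[x_{k-1}]| \geq \delta+2$, and Observation~\ref{obs} gives $rad_p(G,\{x_{k-1}\}) \leq n-\delta-1 < rad_p(G)$, the desired contradiction.

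The main obstacle I expect is the reversal step, since Lemma~\ref{lem:reversal} is phrased for zero forcing while power domination starts with a full domination step. The workaround is to quarantine $v$: work in $G-v$, where $N_G(v)$ is genuinely a ZFS, take the reversal there, and re-attach $v$ at the very end by noting that it is dominated in a single step by any vertex of $N(v)$. Once this is set up, the chain of structural claims — constant live-set size, terminal vertices trapped near $x_k$, $x_{k-1}\sim x_k$, and $N[x_k]\subseteq N[x_{k-1}]$ — is careful but essentially routine bookkeeping.
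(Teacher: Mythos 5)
Your proposal is correct and follows essentially the same route as the paper's proof: the same reduction to $n\geq\delta+3$ and $rad_p(G)=n-\delta$, the constant live-set count, the claim that maximal forcing chains terminate in $N[x_k]\cup\{v\}$, the reversal of $N_G(v)$ inside $G-v$ to show $\{x_k\}$ (and then $\{x_{k-1}\}$) is a PDS, and the final degree count $|N[x_{k-1}]|\geq\delta+2$ yielding the contradiction. Even your ``workaround'' of quarantining $v$ and re-attaching it after the reversal is exactly how the paper handles the domination-versus-forcing mismatch.
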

	
	The next lemma shows that if two distinct vertices of $G$ are adjacent to exactly the same vertices (except for the vertices themselves), then at least one of these vertices must belong to the neighbourhood of any PDS of $G$. 
	
	\begin{lem} \label{lem_fort} (Proposition 9.15 \cite{HL_Inverse_zeroforcing})
		Let $G$ be a graph, $S$ be a PDS of $G$, and $u$ and $v$ be vertices in $G$ such that $N(u)  \setminus  \{v\} =
		N(v)  \setminus  \{u\}$. Then, $N[S] \cap \{u, v\} \neq \emptyset$.
		\begin{proof}
			Assume to the contrary that there exists a PDS $S$ such that $N[S] \cap \{u, v\} = \emptyset$. We have that $N[S] \subseteq V(G)  \setminus  \{u,v\}$ so that $V(G)  \setminus  \{u,v\}$ is a ZFS of $G$. Let $u$ be forced before $v$. Let $w \neq v$ force $u$. Then $w \in N(u)  \setminus  \{v\} = N(v)  \setminus  \{u\}$. But this implies that $w$ has two unmonitored neighbours, $u$ and $v$, when forcing $u$, a contradiction.
		\end{proof}
	\end{lem}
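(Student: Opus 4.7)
The plan is to proceed by contradiction, assuming that $N[S] \cap \{u,v\} = \emptyset$, so that both $u$ and $v$ are unmonitored at the end of the initial domination step. Since $S$ power dominates $G$, both vertices are eventually monitored through propagation, so one can speak of the first step at which one of them, say $u$ (without loss of generality), becomes monitored.

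Next, I would pinpoint the vertex responsible. Let $i$ be the step at which $u$ is first monitored; then $v$ is still unmonitored at the end of step $i-1$. So some vertex $w \in P^{i-1}$ propagates to $u$, which requires $w$ to be adjacent to $u$ and to have $u$ as its unique unmonitored neighbour at that moment. In particular $w$ is monitored at step $i-1$, so $w \neq v$. The hypothesis $N(u) \setminus \{v\} = N(v) \setminus \{u\}$ then forces $w \in N(v)$, i.e.\ $w$ is adjacent to $v$ as well. But $v$ is unmonitored at step $i-1$, giving $w$ two unmonitored neighbours $u$ and $v$, which contradicts the propagation rule.

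Alternatively, one can package this cleanly via zero forcing: assuming $N[S] \cap \{u,v\} = \emptyset$ gives $N[S] \subseteq V(G) \setminus \{u,v\}$, and since $N[S]$ is a ZFS of $G$ the larger set $V(G) \setminus \{u,v\}$ is also a ZFS. In any forcing sequence from this set, one of $u,v$ must be forced first; the same case analysis as above — the forcer cannot be the other of $u,v$ (which is still unmonitored) and cannot be anyone else (since every other neighbour of $u$ is also a neighbour of $v$) — yields the contradiction. I would lean toward writing the proof in the zero forcing framework since it avoids repeatedly invoking the domination step and lets the twin condition be used in a single clean line.

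The only real subtlety to watch is the case $uv \in E(G)$: then $v$ itself lies in $N(u)$, and one must explicitly rule out $w=v$ as the propagator/forcer of $u$. This is immediate from the fact that $v$ is still unmonitored at step $i-1$, but it is worth flagging so the argument covers both the twin-and-adjacent and twin-and-nonadjacent cases uniformly.
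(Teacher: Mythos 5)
Your proposal is correct and, in its zero forcing formulation, is essentially identical to the paper's proof: both pass from $N[S] \subseteq V(G) \setminus \{u,v\}$ to the larger set being a ZFS, take the first of $u,v$ to be forced, and observe that the forcer $w \neq v$ lies in $N(u)\setminus\{v\} = N(v)\setminus\{u\}$ and so has two unmonitored neighbours, a contradiction. The direct propagation phrasing and the explicit remark about the case $uv \in E(G)$ are harmless elaborations of the same argument.
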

	
	\begin{thm} \label{thm_upperboundexample}
		For each positive integer $\delta \geq 2$, there exists infinitely many graphs with minimum degree $\delta$, order $n$, and $rad_p(G) = n - \delta - 1$.
		\begin{proof}
			We divide the proof into cases that depend on the value of $\delta$.
			
			\textit{Case 1:} $\delta = 2$. 
			For $k\geq 3$, we construct a graph $G(2,k)$.
			Let $G(2,k)$ have vertices $u_i$ and $v_i$ for each $1\leq i \leq k$, and three extra vertices $x,y$ and $z$.
			Let $G(2,k)$ have edges: $xu_1, xv_1, xz$ incident with $x$; edges $yu_1, yv_1, yz$ incident with $y$; all possible edges of the form $u_iu_{i+1}$ and $v_iv_{i+1}$, $1 \leq i \leq k-1$; for $i$ an even integer with $2 \leq i \leq k$, all possible edges of the form $u_iv_{i+1}$ and $u_iv_{i-1}$; and $u_kv_k$ (see Figure \ref{fig:g_2_example} for an example where $k=4$).
			The graph $G(2,k)$ has minimum degree 2 and order $n = 2k+3$. 
			The singleton set $\{x\}$ is a PDS as it first dominates its neighbourhood, after which it propagates $z \to y, u_1 \to u_2, v_1 \to v_2, v_2 \to v_3, u_2 \to u_3$ and so on. 
			Similarly $\{y\}$ is a PDS, and both have power domination radius $2k = n-\delta-1$.
			To show that $rad_p(G(2,k)) = 2k$, it suffices to show that no other singleton set is a PDS. 
			Note $N(x) = N(y)$.
			Thus by Lemma \ref{lem_fort}, the only other singletons to consider are $\{z\}, \{u_1\}$ and $\{v_1\}$ By inspection, none of these are PDSs.

			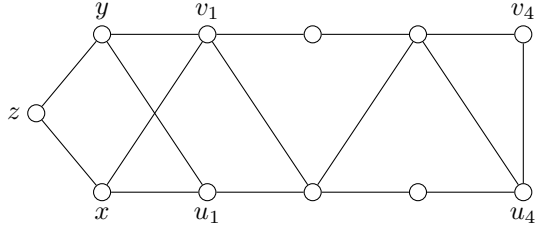
\begin{figure}[h!]
				\centering
				\begin{tikzpicture}[scale=0.7, inner sep=0.8mm] 
					\begin{pgfonlayer}{nodelayer}
						\node [style=whitevertex, label={left:$z$}] (0) at (-4.25, 0) {};
						\node [style=whitevertex, label={above:$y$}] (1) at (-3, 1.5) {};
						\node [style=whitevertex, label={below:$x$}] (2) at (-3, -1.5) {};
						\node [style=whitevertex, label={above:$v_1$}] (3) at (-1, 1.5) {};
						\node [style=whitevertex, label={below:$u_1$}] (4) at (-1, -1.5) {};
						\node [style=whitevertex] (5) at (1, 1.5) {};
						\node [style=whitevertex] (6) at (1, -1.5) {};
						\node [style=whitevertex] (7) at (3, 1.5) {};
						\node [style=whitevertex] (8) at (3, -1.5) {};
						\node [style=whitevertex, label={above:$v_4$}] (9) at (5, 1.5) {};
						\node [style=whitevertex, label={below:$u_4$}] (10) at (5, -1.5) {};
					\end{pgfonlayer}
					\begin{pgfonlayer}{edgelayer}
						\draw (0) to (1);
						\draw (0) to (2);
						\draw (1) to (3);
						\draw (3) to (5);
						\draw (5) to (7);
						\draw (7) to (9);
						\draw (2) to (4);
						\draw (4) to (6);
						\draw (6) to (8);
						\draw (8) to (10);
						\draw (1) to (4);
						\draw (2) to (3);
						\draw (3) to (6);
						\draw (6) to (7);
						\draw (7) to (10);
						\draw (10) to (9);
					\end{pgfonlayer}
				\end{tikzpicture}
				
				\caption{The graph $G(2,4)$.}
				\label{fig:g_2_example}
			\end{figure}
			
			\textit{Case 2:} $\delta = 3$.
			For $k\geq 3$, we construct a graph $G(3,k)$.
			Let the graph have vertices $x$ and $y$, as well as vertices for all ordered pairs $(i,j)$, where $i \in [1,k]$ and $j \in [1,3]$.
			Let $x$ and $y$ be adjacent to each other, and to every vertex of the form $(1,j)$.
			For $i\geq 2$, let the vertices $(i,1)$, $(i,2)$ and $(i,3)$ all be adjacent. 
			For $1 \leq i \leq k-1$, let $G(3,k)$ have all edges of the form $(i,j)(i+1, j)$.
			For $1 \leq i \leq k-1$ and $j\in \{2,3\}$ add all edges $(i,j)(i+1,j-1)$ (see Figure \ref{fig:g_3_example} for an example where $k=4$).
			The resulting graph $G(3,k)$ has order $n = 3k+2$.
			The vertices $(1,1)$ and $(k,3)$ have degree 3, every other vertex has higher degree. 
			It is routine to check that the singletons $\{x\}$ and $\{y\}$ are PDSs with propagation radius $3(k-1) + 1 = n - \delta - 1$.
			Per Lemma \ref{lem_fort}, it suffices to show that no neighbour of $x$ or $y$ is a PDS. 
			By inspection, the singleton $\{(1,j)\}$ is not a PDS for all $j\in \{1,2,3\}$.
			
			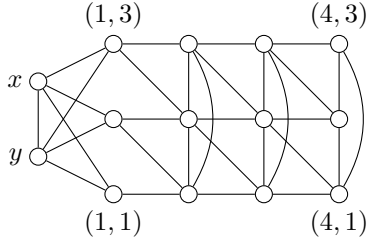
\begin{figure}[h!]
				\centering
				\begin{tikzpicture}[scale=0.5, inner sep=0.8mm]
					\begin{pgfonlayer}{nodelayer}
						\node [style=whitevertex, label={left:$x$}] (0) at (-4, 1) {};
						\node [style=whitevertex, label={left:$y$}] (1) at (-4, -1) {};
						\node [style=whitevertex] (2) at (-2, 0) {};
						\node [style=whitevertex, label={above:$(1,3)$}] (3) at (-2, 2) {};
						\node [style=whitevertex, label={below:$(1,1)$}] (4) at (-2, -2) {};
						\node [style=whitevertex] (5) at (0, 2) {};
						\node [style=whitevertex] (6) at (0, 0) {};
						\node [style=whitevertex] (7) at (0, -2) {};
						\node [style=whitevertex] (8) at (2, 2) {};
						\node [style=whitevertex] (9) at (2, -2) {};
						\node [style=whitevertex] (10) at (2, 0) {};
						\node [style=whitevertex, label={above:$(4,3)$}] (11) at (4, 2) {};
						\node [style=whitevertex] (12) at (4, 0) {};
						\node [style=whitevertex, label={below:$(4,1)$}] (13) at (4, -2) {};
					\end{pgfonlayer}
					\begin{pgfonlayer}{edgelayer}
						\draw (0) to (1);
						\draw (1) to (2);
						\draw (2) to (0);
						\draw (0) to (3);
						\draw (1) to (4);
						\draw (0) to (4);
						\draw (1) to (3);
						\draw (3) to (5);
						\draw (5) to (8);
						\draw (8) to (11);
						\draw (6) to (2);
						\draw (6) to (10);
						\draw (10) to (12);
						\draw (4) to (7);
						\draw (7) to (9);
						\draw (9) to (13);
						\draw [bend left] (5) to (7);
						\draw [bend left] (8) to (9);
						\draw [bend left] (11) to (13);
						\draw (5) to (6);
						\draw (6) to (7);
						\draw (8) to (10);
						\draw (10) to (9);
						\draw (11) to (12);
						\draw (12) to (13);
						\draw (2) to (7);
						\draw (3) to (6);
						\draw (5) to (10);
						\draw (6) to (9);
						\draw (8) to (12);
						\draw (10) to (13);
					\end{pgfonlayer}
				\end{tikzpicture}
				
				\caption{The graph $G(3,4)$.}
				\label{fig:g_3_example}
			\end{figure}
			
			\textit{Case 4:} $\delta \geq 4$.
			Let $k\geq 3$, and construct $G(\delta, k)$ as follows.
			Let $G(\delta, k)$ have vertices $u,v,x$ and $y$, as well as a vertex for each ordered pair $(i,j)$, where $i\in [1,k]$ and $j\in [1,\delta-2]$.
			Let all four vertices $u,v,x$ and $y$ be adjacent; have $u,v$ adjacent to all vertices $(1,j)$; have $x,y$ adjacent to all vertices $(k,j)$; let all vertices $(i,1), (i,2), \dots, (i,\delta-2)$ be adjacent to each other; for $1\leq i \leq k-1$ include all possible edges of the form $(i,j)(i+1,j)$; for $1\leq i \leq k-1$ and $2 \leq j \leq \delta - 2$ add all edges $(i,j)(i+1, j-1)$ (see Figure \ref{fig:g_4_example} for the case where $\delta=4$ and $k=5$).
			The graph $G(\delta, k)$ has $n = (\delta-2) k + 4$ vertices, and minimum degree $\delta$ (attained by the vertex $(1,1)$).
			Each of the singletons $\{u\}, \{v\}, \{x\}$ and $\{y\}$ is a PDS with propagation radius $(k-1)(\delta - 2) + 1 = n-\delta -1$.
			Note that $N(u)  \setminus  \{v\} = N(v)  \setminus  \{u\}$, and that $N(x)  \setminus  \{y\} = N(y)  \setminus  \{x\}$, so by Lemma \ref{lem_fort}, no other singleton is a PDS.
			
			\begin{figure}[ht]
				\centering
				\begin{tikzpicture}[scale=0.75]
					\begin{pgfonlayer}{nodelayer}
						\node [style=whitevertex] (0) at (0, 1) {};
						\node [style=whitevertex] (1) at (0, -1) {};
						\node [style=whitevertex] (2) at (-2, 1) {};
						\node [style=whitevertex] (3) at (-2, -1) {};
						\node [style=whitevertex] (4) at (2, 1) {};
						\node [style=whitevertex] (5) at (2, -1) {};
						\node [style=whitevertex] (6) at (4, 1) {};
						\node [style=whitevertex] (7) at (4, -1) {};
						\node [style=whitevertex] (8) at (-4, 1) {};
						\node [style=whitevertex] (9) at (-4, -1) {};
						\node [style=whitevertex] (10) at (6, 0.75) {};
						\node [style=whitevertex] (11) at (6, -0.75) {};
						\node [style=whitevertex] (12) at (-6, 0.75) {};
						\node [style=whitevertex] (13) at (-6, -0.75) {};
						\node [style=none] (14) at (-6.75, 0.75) {$u$};
						\node [style=none] (15) at (-6.75, -0.75) {$v$};
						\node [style=none] (16) at (-4, 1.75) {$(1,2)$};
						\node [style=none] (17) at (-4, -1.75) {$(1,1)$};
						\node [style=none] (18) at (4, 1.75) {$(5,2)$};
						\node [style=none] (19) at (4, -1.75) {$(5,1)$};
						\node [style=none] (20) at (-4, 2.75) {};
						\node [style=none] (21) at (4, 2.75) {};
						\node [style=none] (22) at (-4, -2.75) {};
						\node [style=none] (23) at (4, -2.75) {};
						\node [style=none] (24) at (-4, 3.25) {};
						\node [style=none] (25) at (5.75, 2.75) {};
						\node [style=none] (26) at (5.75, -2.75) {};
						\node [style=none] (27) at (-4, -3.25) {};
						\node [style=none] (28) at (7.25, 0.75) {$x$};
						\node [style=none] (29) at (7.25, -0.75) {$y$};
					\end{pgfonlayer}
					\begin{pgfonlayer}{edgelayer}
						\draw (12) to (13);
						\draw (12) to (8);
						\draw (12) to (9);
						\draw (13) to (9);
						\draw (8) to (9);
						\draw (2) to (3);
						\draw (0) to (1);
						\draw (4) to (5);
						\draw (6) to (7);
						\draw (10) to (11);
						\draw (8) to (2);
						\draw (2) to (0);
						\draw (0) to (4);
						\draw (4) to (6);
						\draw (6) to (10);
						\draw (11) to (7);
						\draw (7) to (5);
						\draw (5) to (1);
						\draw (1) to (3);
						\draw (3) to (9);
						\draw (13) to (8);
						\draw (8) to (3);
						\draw (2) to (1);
						\draw (0) to (5);
						\draw (4) to (7);
						\draw (7) to (10);
						\draw (6) to (11);
						\draw [bend left] (12) to (20.center);
						\draw [bend left=15, looseness=0.75] (20.center) to (21.center);
						\draw [bend left] (21.center) to (10);
						\draw [bend right=15, looseness=0.75] (22.center) to (23.center);
						\draw [bend right] (13) to (22.center);
						\draw [bend right] (23.center) to (11);
						\draw [bend left] (12) to (24.center);
						\draw [bend left, looseness=0.50] (24.center) to (25.center);
						\draw [bend left=45] (25.center) to (11);
						\draw [bend right] (13) to (27.center);
						\draw [bend right, looseness=0.50] (27.center) to (26.center);
						\draw [bend right=45] (26.center) to (10);
					\end{pgfonlayer}
				\end{tikzpicture}
				
				\caption{The graph $G(4,5)$. Vertices $u,v$ are on the left, and $x,y$ on the right.}
				\label{fig:g_4_example}
			\end{figure}
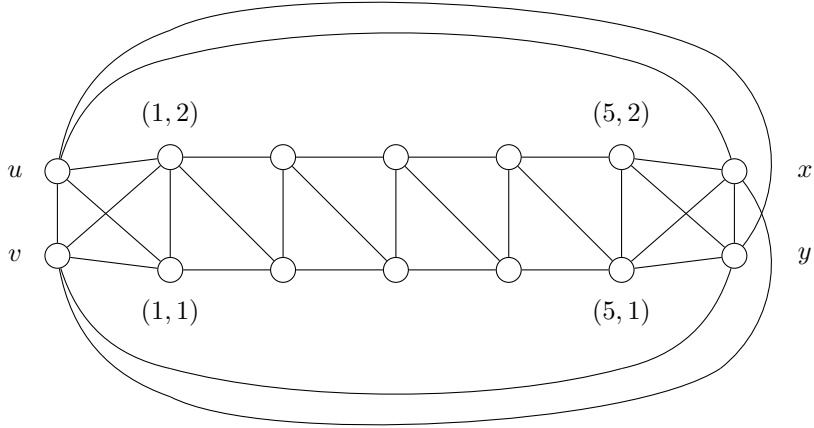
		\end{proof}
		
	\end{thm}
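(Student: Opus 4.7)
The plan is to construct, for each $\delta \geq 2$, an infinite family $\{G(\delta, k)\}_{k \geq 3}$ of graphs whose orders $n$ grow linearly with $k$, each of minimum degree $\delta$, for which $rad_p(G(\delta, k)) = n - \delta - 1$. The design principle is the equality case of Observation \ref{obs}: if there is a singleton $\gamma_p$-set $\{v\}$ with $|N[v]| = \delta + 2$ such that propagation from $\{v\}$ monitors exactly one new vertex per step after domination, the bound is attained. So I would pick a distinguished vertex $v$ of degree $\delta + 1$ and arrange the rest of $V(G)$ so that propagation proceeds single-file along a long path of length about $n - \delta - 2$.

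Concretely, I would build each $G(\delta, k)$ as a small \emph{starter block} attached to a \emph{chain} of $k$ narrow blocks of width roughly $\delta - 2$, with each block connected to the next by parallel and diagonal edges so that, once a column is monitored, exactly one new vertex of the next column has its unique unmonitored neighbour and propagates. The starter block is designed to contain the distinguished vertex $v$ together with a \emph{twin} $v'$ satisfying $N(v) \setminus \{v'\} = N(v') \setminus \{v\}$; this controls the set of potential singleton PDSs via Lemma \ref{lem_fort}, since any PDS must intersect $N[v] \cup N[v']$. The chain's far end is arranged so that exactly one vertex there has degree $\delta$ (realising the minimum degree). The verification would then have three pieces: (i) compute $n$ and confirm $\delta(G(\delta,k)) = \delta$; (ii) trace the propagation of $\{v\}$ block-by-block and check that exactly one vertex becomes monitored at each step, giving radius $n - \delta - 1$; (iii) use Lemma \ref{lem_fort} plus case analysis on the handful of remaining candidate singletons in and around the starter block to conclude that no other singleton is a PDS, so $\gamma_p = 1$ and $rad_p(G(\delta,k))$ is indeed attained by $\{v\}$.

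The main obstacle is the small-$\delta$ boundary. For $\delta = 2$ the "chain" is essentially two parallel paths, which is too thin for the uniform grid template, so the starter block and the chain-connection edges must be hand-tuned (for instance by identifying the two "twins" with a second pair of vertices $x, y$ and adding a pendant-like vertex $z$ to prevent short-circuiting). A similar tailoring is needed for $\delta = 3$, where the chain blocks become triangles rather than generic $(\delta-2)$-cliques. In each small case the delicate point is verifying that at no step two vertices are simultaneously eligible to propagate (which would shorten the radius below $n - \delta - 1$) and that no chord or shortcut lets an alternative singleton dominate the graph more quickly; this I would handle by explicit inspection. For $\delta \geq 4$ the template is uniform enough that minor bookkeeping suffices: a $(\delta-2)\times k$ grid with diagonal edges, capped on both ends by twin pairs $\{u,v\}$ and $\{x,y\}$ fully adjacent to the end columns, yields the desired parameters with propagation from any of $u, v, x, y$ taking exactly $(k-1)(\delta-2)+1 = n-\delta-1$ steps.
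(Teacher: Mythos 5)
Your proposal follows essentially the same route as the paper: a chain of $(\delta-2)$-cliques joined by parallel and diagonal edges so that exactly one vertex is monitored per propagation step, capped by twin vertices so that Lemma \ref{lem_fort} limits the candidate singleton PDSs, with $\delta=2$ and $\delta=3$ handled by the same kind of hand-tailored variants; your $\delta\geq 4$ template (twin pairs $\{u,v\}$, $\{x,y\}$ fully joined to the end columns, radius $(k-1)(\delta-2)+1=n-\delta-1$) is exactly the paper's $G(\delta,k)$. The plan is correct, and completing it amounts to the same routine verifications the paper performs.
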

	
	\section{Sharpness for regular graphs}
	
	\label{sec:sharp_reg}
	
	In this section we show that the bound of Theorem \ref{thm:delta_1_up_bound} can not be improved for regular graphs by constructing for each $\delta \geq 2$ a regular graph $G(\delta)$ that attains the bound $rad_p(G(\delta)) = n - \delta - 1$ of Theorem \ref{thm:delta_1_up_bound}.
	
	\begin{prop}
		\label{prop:odd_regular_sharp}
		For each integer $\delta \geq 2$ there exists a $\delta$-regular graph, $G(\delta)$ such that $rad_p(G(\delta))=n-\delta-1$.
	\end{prop}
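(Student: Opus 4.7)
The plan is to construct, for each $\delta \geq 2$, an explicit $\delta$-regular graph on $n = \delta + 3$ vertices achieving $rad_p = 2 = n - \delta - 1$. A uniform choice is $G(\delta) := K_n - C$, where $C$ is a Hamilton cycle in $K_n$ whose edges are deleted; concretely, label $V(G(\delta)) = \{0, 1, \dots, n-1\}$ and let $C$ be the cycle $0\text{-}1\text{-}2\text{-}\cdots\text{-}(n-1)\text{-}0$. Deleting $E(C)$ removes exactly two edges at every vertex, so $G(\delta)$ is $(n-3)$-regular, i.e., $\delta$-regular.

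For the upper bound $rad_p(G(\delta)) \leq 2$, I would take $S = \{0\}$ and trace one propagation step. In $G(\delta)$, vertex $0$ is adjacent to every vertex except $1$ and $n-1$, so $P^1 = V(G(\delta)) \setminus \{1, n-1\}$ and only $\{1, n-1\}$ is unmonitored. Vertex $2$ is adjacent in $G(\delta)$ to every vertex except $\{1, 2, 3\}$, so its unique unmonitored neighbour is $n-1$, and it propagates there; symmetrically, vertex $n-2$ propagates to $1$. Hence $P^2 = V(G(\delta))$, so $\{0\}$ is a PDS with $rad_p(G(\delta), \{0\}) = 2$. In particular, $\gamma_p(G(\delta)) = 1$.

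For the matching lower bound, since $\delta = n - 3 < n - 1$ no singleton dominates $V(G(\delta))$, and so every minimum PDS requires at least one propagation step beyond the initial domination step. Thus $rad_p(G(\delta)) \geq 2$, and combining gives $rad_p(G(\delta)) = 2 = n - \delta - 1$, as required.

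There is no serious obstacle in this argument; it is only worth observing that the small cases reduce to familiar graphs, namely $G(2) \cong C_5$ and $G(3) \cong K_3 \square K_2$ (the triangular prism), both of which fit the analysis above without modification.
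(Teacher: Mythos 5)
Your proposal is correct and is essentially the paper's own argument: the paper uses exactly the same construction $G(\delta)=K_{\delta+3}-E(C_{\delta+3})$, shows a single vertex power dominates in two steps, and concludes $rad_p=2=n-\delta-1$. The only cosmetic difference is in the lower bound: the paper invokes vertex-transitivity to say every singleton has radius $2$, while you observe that no singleton can dominate since $\Delta=n-3<n-1$; both are immediate.
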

	\begin{proof}
		Let $n=\delta+3$. Let $K_n$ and $C_n$ have vertex set $V=\{v_1,...,v_n\}$. Let $C_n$ have edge set $E(C_n)=\{v_1v_2, v_2v_3,...,v_nv_1\}$. We define $G(\delta)$ as follows: $V(G(\delta))=V(K_n)$ and $E(G(\delta))=E(K_n)-E(C_n)$. Note that $G(\delta)$ is the circulant $C_n\langle 2,3,\dots,\lceil\frac{n}{2}\rceil \rangle$. Since $\delta \geq 2$, we must have $n \geq 5$. Let $S=\{v_3\}$. Then $N[S]=V-\{v_2,v_4\}$. Since $v_1 \in N[S]$ is adjacent to $v_4$ and $v_1v_2 \not\in E(G(\delta))$, $v_1$ propagates to $v_4$ in step 2. By symmetry we also have that $v_5$ propagates to $v_2$ in step 2. Thus, $S$ is a $\gamma_p(G(\delta))$-set with propagation radius 2. By the symmetry of $G(\delta)$, every singleton is a PDS with propagation radius 2. Therefore $rad_p(G(\delta))=2=n-\delta-1$.
	\end{proof}
	
	\begin{figure}[hbt!]
		\centering
		\begin{tikzpicture}[scale=0.8, inner sep = 0.8mm]
			\begin{pgfonlayer}{nodelayer}

				\node [style=whitevertex] (1) at (2.75, 3) {};
				\node [style=whitevertex] (2) at (5.25, 3) {};
				\node [style=whitevertex] (3) at (7, 1) {};
				\node [style=whitevertex] (4) at (6.5, -1.25) {};
				\node [style=whitevertex] (5) at (4, -2.25) {};
				\node [style=whitevertex] (6) at (1.5, -1.25) {};
				\node [style=whitevertex] (7) at (1, 1) {};
			\end{pgfonlayer}
			\begin{pgfonlayer}{edgelayer}

				\draw (1) to (3);
				\draw (1) to (4);
				\draw (1) to (5);
				\draw (1) to (6);
				\draw (2) to (4);
				\draw (2) to (5);
				\draw (2) to (6);
				\draw (2) to (7);
				\draw (3) to (5);
				\draw (3) to (6);
				\draw (3) to (7);
				\draw (4) to (6);
				\draw (4) to (7);
				\draw (5) to (7);
			\end{pgfonlayer}
		\end{tikzpicture}
		
		\caption{A regular graph $G(4)$ attaining the $n-\delta - 1$ bound. The graph has $\delta = 4$ and $n=7$.}
		\label{fig:extremal_regular_graphs}
	\end{figure}
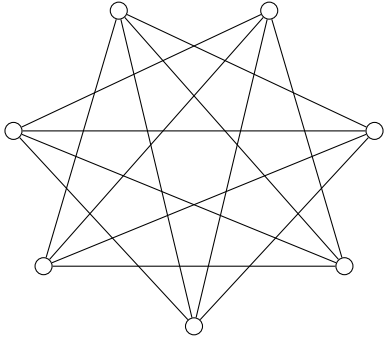
	
	\section{Upper bounds when $\bm{\gamma_p > 1}$}
	
	\label{sec:upp_gam_2}
	
	In this section, we show that the upper bound in Theorem \ref{thm:delta_1_up_bound} can be improved when $\gamma_p > 1$ --- and that the improved bound is sharp for many values of $\delta$ and $\gamma_p$.
	
	Let $S$ be a power dominating set with propagation radius $k$ and $P^0 = \emptyset, P^1 = N[S], \dots, P^k$ be its propagation sequence.
	Define the \textbf{excess} $\bm{\epsilon(S)}$ of $S$ as the sum
	\[
	\epsilon(S) = \sum_{i=1}^k \left(|P^i  \setminus  P^{i-1}| - 1 \right).
	\]
	At each propagation step of $P$, at least one new vertex is monitored. 
	The excess measures how many extra vertices are monitored, totalled over all the steps, which yields Remark \ref{rem:excess}.
	
	\begin{rem}
		Let $G$ be a graph of order $n$.
		Let $S$ be a $\gamma_p$-set of minimum radius (and thus maximum excess) in $G$.
		Then $rad_p(G) = n - \epsilon(S)$.
		\label{rem:excess}
	\end{rem}
	
	\begin{thm}
		Let $G$ be a connected graph of order $n$, minimum degree $\delta \geq 2$ and power domination number $\gamma_p \geq 2$. 
		Then
		\[
		rad_p(G) \leq 
		\begin{cases}
			n - 5 & \text{ if } \gamma_p = 2, \delta = 2\\
			n - \delta - 2 & \text{ if } \gamma_p = 2, \delta > 2\\
			n - \gamma_p - \max\{2\gamma_p, \gamma_p + \delta\} + 1 & \text{ if } \gamma_p > 2\\
		\end{cases}
		\]
		\label{thm:upper_bound_geq_2}
	\end{thm}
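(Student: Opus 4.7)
The strategy is to apply Remark~\ref{rem:excess}, which recasts the theorem as a lower bound on $\max_S \epsilon(S)$ over $\gamma_p$-PDSs $S$: we must exhibit a $\gamma_p$-PDS $S$ with $\epsilon(S) \geq 5$, $\delta + 2$, or $2\gamma_p + \max\{\gamma_p,\delta\} - 1$ in the three cases (the last is just a rewriting of $\gamma_p + \max\{2\gamma_p,\gamma_p+\delta\} - 1$). Since $\epsilon(S) = (|N[S]|-1) + \sum_{i \geq 2}(|P^i \setminus P^{i-1}| - 1)$, the excess decomposes cleanly into the first-step contribution $|N[S]|-1$ plus contributions from later propagation steps, and I would bound each in turn.

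For the first step I would establish two lower bounds on $|N[S]|$ valid for any minimum PDS: (i) $|N[S]| \geq \gamma_p + \delta$, via the observation that if some $v \in S$ had $N[v] \subseteq N[S \setminus \{v\}]$ then the propagation sequences of $S$ and $S \setminus \{v\}$ would coincide, contradicting $|S| = \gamma_p$; and (ii) $|N[S]| \geq 2\gamma_p$, by showing that each $s \in S$ must have an external $S$-private neighbour, since otherwise a symmetric minimality argument lets us drop $s$. Together these give $|N[S]| \geq \max\{2\gamma_p,\gamma_p+\delta\}$, so the first step alone contributes at least $\max\{2\gamma_p,\gamma_p+\delta\} - 1$ to $\epsilon(S)$, leaving exactly $\gamma_p$ further units to be extracted from propagation in the $\gamma_p > 2$ case.

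The remaining $\gamma_p$ units would be obtained by an exchange argument in the spirit of Theorem~\ref{thm:delta_1_up_bound}. If no $\gamma_p$-PDS meets the required excess, then for an optimal such $S$ almost every propagation step monitors exactly one new vertex, so the propagation decomposes into long forcing chains rooted at the vertices of $S$. Reversals (Lemma~\ref{lem:reversal}) applied to each of these chains let us \emph{slide} each $s \in S$ to the terminus of its chain, producing a new $\gamma_p$-PDS $S'$ whose first step already covers strictly more vertices, so $\epsilon(S') > \epsilon(S)$. Iterating produces a PDS whose excess meets the target, with the gain of roughly one unit per chain accounting for the $\gamma_p$ term.

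The main obstacle will be ensuring that the $\gamma_p$ exchanges can be carried out compatibly without one invalidating another, and that each genuinely contributes the claimed unit of excess. The corner case $\gamma_p = 2, \delta = 2$ requires sharper analysis: bound~(ii) only yields $|N[S]| \geq 4$, so two additional units of excess must come from later steps. These are squeezed out of the first couple of propagation steps by exploiting the rigid local structure forced by $\delta = 2$ together with the non-containment of neighbourhoods forced by minimality of $S$, so that at least two early steps must each monitor two vertices rather than one.
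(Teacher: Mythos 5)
Your reduction via Remark \ref{rem:excess} is fine, and your claim (i) is correct: for every $v\in S$ we must have $N[v]\not\subseteq N[S\setminus\{v\}]$ (else the propagation sequences of $S$ and $S\setminus\{v\}$ coincide), and the resulting private vertices are pairwise distinct, giving $|N[S]|\geq \gamma_p+\delta$ for any minimum PDS. Claim (ii), however, already has a gap: if $s\in S$ has no neighbour in $S$ and no external $S$-private neighbour, you cannot simply drop $s$ --- after removing it, $s$ itself is initially unmonitored and its presence can stall forcing at its neighbours, so $S\setminus\{s\}$ need not be a PDS. The paper's Claim 1 repairs this by swapping $s$ for a neighbour $y$ of some unmonitored vertex and appealing to the extremal (maximum-excess) choice of $S$, not by deletion. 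More seriously, your first-stage bounds only give $|N[S]|\geq\max\{2\gamma_p,\gamma_p+\delta\}$, which is short of the target by exactly $\gamma_p$, so the whole proof rests on your second stage, and that stage does not work as described.

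The reversal/sliding argument does not transfer from Theorem \ref{thm:delta_1_up_bound}. When $N[S]$ is viewed as a ZFS, the maximal forcing chains are rooted at the $|N[S]|$ vertices of $N[S]$, not at the $\gamma_p$ vertices of $S$; a reversal is a ZFS with $|N[S]|$ vertices, and there is no reason its termini lie inside the closed neighbourhood of any $\gamma_p$-element set, so ``slide each $s\in S$ to the terminus of its chain'' is not well defined. In the $\gamma_p=1$ proof this worked only because the contradiction hypothesis forced excess exactly $\delta$, making the live set constant of size $\delta$ and all chain termini lie in $N[x_k]\cup\{v\}$, and even then two further claims were needed before $\{x_k\}$ and $\{x_{k-1}\}$ could be certified as PDSs; under your weaker deficiency assumption with $\gamma_p\geq 2$ none of that rigidity is available. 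Moreover, a strictly larger first step does not by itself give $\epsilon(S')>\epsilon(S)$ (excess equals $n-rad_p(G,S')$), your iteration has no guarantee of gaining the required $\gamma_p$ units, and in some regimes the correct contradiction is not excess-increasing at all: when $\delta<\gamma_p$ the paper's argument produces a strictly smaller PDS ($S_c\cup T_r$), contradicting $|S|=\gamma_p$. The paper instead extracts the entire bound from the domination step: it partitions $S$ into $S_c$ and $S_i$, shows each vertex of $S_c$ has two external private neighbours and each vertex of $S_i$ one, partitions $N(S)$ into $T_c,T_i,T_r$, and from the assumed deficiency $|N(S)|<\max\{2\gamma_p,\gamma_p+\delta\}$ (resp. $|N(S)|\leq\delta$ or $|N(S)|<4$ when $\gamma_p=2$) derives in every case either a smaller PDS or a same-size PDS of larger excess via local one- or two-vertex swaps. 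To salvage your plan, aim to prove the stronger domination-step inequality $|N(S)|\geq\max\{2\gamma_p,\gamma_p+\delta\}$ for a maximum-excess $\gamma_p$-set directly, rather than trying to harvest the missing $\gamma_p$ units from later propagation steps.
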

	
	\begin{proof}
		Let $G=(V,E)$ be as hypothesised in the Theorem statement, and let $S$ be a power dominating set with with $\gamma_p$ vertices and propagation sequence $P^0, P^1, \dots, P^k$, where $k=rad_p(G)$ --- and note that $S$ has maximum excess among all $\gamma_p$-power dominating sets.
		Assume to the contrary that either $rad_p(G) > n - \gamma_p - \max\{2\gamma_p, \gamma_p + \delta\} + 1$, or that $rad_p(G) > n - \delta - 2$ in the case that $\gamma_p =2$ and $\delta > 2$.
		Per Remark \ref{rem:excess}, this is equivalent to assuming that $\epsilon(S) < \gamma_p + \max\{2\gamma_p, \gamma_p + \delta\} - 1$, or that $\epsilon(S) < \delta + 2$ when $\gamma_p = 2$ and $\delta > 2$.
		
		Partition $S$ into two parts, $S_c$ and $S_i$: $S_c$ contains the vertices of $S$ with at least one neighbour in $S$, and $S_i$ contains the vertices of $S$ that have no neighbours in $S$.
		Note that $|S_c| + |S_i| = \gamma_p$, and that $|S_c| \neq 1$.
		
		\textit{Claim 1:} Every vertex of $S_i$ has at least one external $S$-private neighbour.
		Assume to the contrary that there is a vertex $u\in S_i$ with no external $S$-private neighbour.
		If there is no vertex $z \in V \setminus N[S]$ such that $N(z)\cap N(u) \neq \emptyset$, then $S \setminus \{u\}$ is a smaller power dominating set, contradicting the minimality of $S$.
		If there is such a vertex $z$, let $y$ be a vertex of $N(z)\cap N(u)$. 
		The set $S' = (S \setminus \{u\}) \cup \{y\}$ is a power-dominating set with $\gamma_p$ vertices, and $\epsilon(S) < \epsilon(S')$, contradicting the choice of $S$ and proving the claim.
		
		\textit{Claim 2:} Every vertex of $S_c$ has at least two $S$-private neighbours that do not belong to $S$.
		Assume to the contrary some vertex $u\in S_c$ has at most one $S$-private neighbour outside of $S$, and note that $u$ is adjacent to some vertex $v\in S$.
		Then $S \setminus \{u\}$ is a smaller power dominating set, contradicting the minimality of $S$.
		
		We partition $N(S)$ into three parts $T_c, T_i$ and $T_r$.
		For each vertex in $S_c$, pick two of its external $S$-private neighbours, and let $T_c$ consist of all the vertices picked in this way.
		For each vertex in $S_i$, pick one of its external $S$-private neighbours, and let $T_i$ consist of these vertices (see Figure \ref{fig:tc_ti_tr}).
		Let $T_r = N(S)  \setminus  (T_c  \cup  T_i)$ contain all the remaining vertices of $N(S)$.
		Note that $|T_c| = 2|S_c|$ and $|T_i| = |S_i|$.
		
		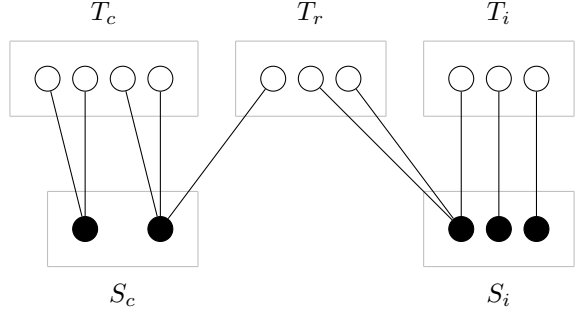
\begin{figure}[ht]
			\centering
			\begin{tikzpicture}[scale=0.5]
				\begin{pgfonlayer}{nodelayer}
					\node [style=none] (0) at (-7, 1) {};
					\node [style=none] (1) at (-7, -1) {};
					\node [style=none] (2) at (-3, -1) {};
					\node [style=none] (3) at (-3, 1) {};
					\node [style=none] (4) at (3, 1) {};
					\node [style=none] (5) at (3, -1) {};
					\node [style=none] (6) at (7, 1) {};
					\node [style=none] (7) at (7, -1) {};
					\node [style=none] (8) at (-8, 5) {};
					\node [style=none] (9) at (-8, 3) {};
					\node [style=none] (10) at (-3, 3) {};
					\node [style=none] (11) at (-3, 5) {};
					\node [style=none] (12) at (3, 5) {};
					\node [style=none] (13) at (3, 3) {};
					\node [style=none] (14) at (7, 5) {};
					\node [style=none] (15) at (7, 3) {};
					\node [style=none] (16) at (-2, 5) {};
					\node [style=none] (17) at (-2, 3) {};
					\node [style=none] (18) at (2, 5) {};
					\node [style=none] (19) at (2, 3) {};
					\node [style=blackvertex] (20) at (-4, 0) {};
					\node [style=blackvertex] (21) at (-6, 0) {};
					\node [style=whitevertex] (22) at (-4, 4) {};
					\node [style=whitevertex] (23) at (-5, 4) {};
					\node [style=whitevertex] (24) at (-6, 4) {};
					\node [style=whitevertex] (25) at (-7, 4) {};
					\node [style=blackvertex] (26) at (4, 0) {};
					\node [style=blackvertex] (27) at (5, 0) {};
					\node [style=blackvertex] (28) at (6, 0) {};
					\node [style=whitevertex] (29) at (4, 4) {};
					\node [style=whitevertex] (30) at (5, 4) {};
					\node [style=whitevertex] (31) at (6, 4) {};
					\node [style=whitevertex] (32) at (-1, 4) {};
					\node [style=whitevertex] (33) at (0, 4) {};
					\node [style=whitevertex] (34) at (1, 4) {};
					\node [style=none] (35) at (-5, -1.75) {$S_c$};
					\node [style=none] (36) at (5, -1.75) {$S_i$};
					\node [style=none] (37) at (-5.5, 5.75) {$T_c$};
					\node [style=none] (38) at (0, 5.75) {$T_r$};
					\node [style=none] (39) at (5, 5.75) {$T_i$};
				\end{pgfonlayer}
				\begin{pgfonlayer}{edgelayer}
					\draw [style=light] (8.center) to (9.center);
					\draw [style=light] (9.center) to (10.center);
					\draw [style=light] (10.center) to (11.center);
					\draw [style=light] (11.center) to (8.center);
					\draw [style=light] (0.center) to (1.center);
					\draw [style=light] (1.center) to (2.center);
					\draw [style=light] (2.center) to (3.center);
					\draw [style=light] (3.center) to (0.center);
					\draw [style=light] (4.center) to (5.center);
					\draw [style=light] (5.center) to (7.center);
					\draw [style=light] (7.center) to (6.center);
					\draw [style=light] (6.center) to (4.center);
					\draw [style=light] (12.center) to (13.center);
					\draw [style=light] (13.center) to (15.center);
					\draw [style=light] (15.center) to (14.center);
					\draw [style=light] (14.center) to (12.center);
					\draw [style=light] (16.center) to (17.center);
					\draw [style=light] (17.center) to (19.center);
					\draw [style=light] (19.center) to (18.center);
					\draw [style=light] (18.center) to (16.center);
					\draw (25) to (21);
					\draw (21) to (24);
					\draw (23) to (20);
					\draw (20) to (22);
					\draw (29) to (26);
					\draw (30) to (27);
					\draw (31) to (28);
					\draw (20) to (32);
					\draw (26) to (33);
					\draw (26) to (34);
				\end{pgfonlayer}
			\end{tikzpicture}
			
			\caption{Partition of $N[S]$ into five parts. The vertices in parts of $S$ are black, and the vertices in $N(S)$ are white.}
			\label{fig:tc_ti_tr}
		\end{figure}
		
		\textit{Case 1:} $\gamma_p > 2$ and $\delta \geq \gamma_p$. 
		Because $rad_p(G) > n - 2\gamma_p - \delta + 1$, we have $|S| + |N(S)| - 1 \leq \epsilon(S) < 2\gamma_p + \delta - 1$.
		Thus $|N(S)| < \gamma_p + \delta$.
		Partitioning $N(S)$ and using $|S_c| + |S_i| = \gamma_p$ yields:
		\begin{align*}
			|T_c| + |T_i| + |T_r| &< \gamma_p + \delta\\
			2|S_c| + |S_i| + |T_r| &< \gamma_p + \delta\\
			|S_c| + |T_r| &< \delta
		\end{align*}
		We claim either $S_c = \emptyset$ or $S_i = \emptyset$.
		Assume not, and let $u$ be a vertex of $S_i$. 
		Then $|T_r| < \delta - |S_c| \leq \delta - 2$. 
		The vertex $u$ has one neighbour in $T_i$, and all other neighbours in $T_r$.
		Thus we have the contradiction $d(u) < \delta$, proving the claim.
		Suppose $S_c = \emptyset$, so $S = S_i$, and let $u$ be a vertex of $S_i$.
		Since $d(u) \geq \delta$ and $|T_r| < \delta$, we have $|T_r| = \delta - 1$, and $u$ is adjacent to every vertex of $T_r$.
		Similarly, every vertex of $S$ is adjacent to every vertex of $T_r$.
		Let $v$ be a vertex of $T_r$, and set $S' = \{u,v\}$.
		The set $N(S')$ contains all of $S \cup T_r$. 
		Since each vertex of $S$ can propagate to its unique neighbour in $T_i$, we have that $N[S] \subseteq P^2(S')$.
		Thus $S'$ is a power dominating set with $2 < \gamma_p$ vertices, contradicting the choice of $S$.
		Thus $S = S_c$.
		Since $|S_c| + |T_r| \leq \delta - 1$, we have $\gamma_p \leq \delta - 1$, and $|T_r| \leq \delta - \gamma_p - 1$.
		Consider an arbitrary vertex $u\in S$. 
		The only possible neighbours of $u$ are the two neighbours in $T_c$, the $|T_r| \leq \delta - \gamma_p - 1$ vertices in $T_r$, and the remaining $\gamma_p - 1$ vertices of $S$. 
		There are at most $\delta$ of these possible neighbours in total.
		Thus $u$ must be adjacent to every other vertex of $S$, and is adjacent to every vertex of $T_r$ --- of which there are exactly $\delta - \gamma_p - 1$.
		As $u$ is arbitrary, the set $S$ is a clique, and every vertex of $S$ is adjacent to every vertex of $T_r$.
		Denote by $x$ and $y$ the neighbours of $u$ in $T_c$.
		We claim that $x$ is not adjacent to any vertex of $T_c \setminus \{y\}$.
		Assume to the contrary that $x$ is adjacent to some vertex $z$ of $T_c  \setminus  \{y\}$, and let $w$ denote the neighbour of $z$ in $S$.
		Let $S' = (S  \setminus  \{u,w\}) \cup \{x\}$.
		The set $N[S']$ contains all of $N[S]$ except for possibly $y$, and one vertex $q$ of $N(w) \cap T_c$. 
		Therefore $N[S] \subseteq P^2(S')$, so $S'$ is a power dominating set.
		This contradicts the choice of $S$ and proves the claim.
		The degree of $x$ is at least $\delta$, $x$ is not adjacent to any vertex of $T_c$ (possibly except for $y$), and the only vertex of $S_c$ to which $x$ is adjacent is $u$.
		Thus $x$ has at most $\delta - \gamma_p + 1$ neighbours in $N[S]$. 
		As $\gamma_p \geq 3$, we see $x$ has at least two neighbours in $V  \setminus  N[S]$.
		Therefore $S' = (S  \setminus  \{u\}) \cup \{x\}$ is a power dominating set with $\epsilon(S') > \epsilon(S)$, a contradiction.
		
		\textit{Case 2:} $\gamma_p > 2$ and $\delta < \gamma_p$.
		By assumption $rad_p(G) > n - 3\gamma_p + 1$, thus $|S| + |N(S)| - 1 \leq \epsilon(S) < 3\gamma_p - 1$.
		Therefore $|N(S)| \leq 2\gamma_p - 1$.
		We claim $|S_c|\leq \gamma_p - 2$.
		If $S_c = S$, then $|T_c| = 2|S_c| = 2\gamma_p$, contradicting $|N(S)| \leq 2\gamma_p - 1$. 
		If $|S_c| = \gamma_p - 1$, then $|S_i| = 1$, and the one vertex of $S_i$ has one neighbour in $T_i$ and at least one neighbour in $T_r$, so $|T_c| + |T_i| + |T_r| \geq 2|S_c| + 2 \geq 2\gamma_p$, a contradiction proving the claim.
		Thus $|S_i| \geq 2$.
		Each vertex of $|S_i|$ has at least one neighbour in $T_r$, so $T_r \neq \emptyset$.
		Recall that $|S_c| = \gamma_p - |S_i|$ to find:
		\begin{align*}
			|T_c| + |T_i| + |T_r| &\leq 2\gamma_p - 1\\
			2|S_c| + |S_i| + |T_r| &\leq 2\gamma_p - 1\\
			2(\gamma_p - |S_i|) + |S_i| + |T_r| &\leq 2\gamma_p - 1\\
			|T_r| + 1 &\leq |S_i|.
		\end{align*}
		Let $S' = S_c \cup T_r$, and note that $S'$ is a power dominating set.
		Since $|T_r| < |S_i|$, we have $|S'| < |S|$, contradicting the choice of $S$.
		
		\textit{Case 3:} $\gamma_p = \delta = 2$.
		By the assumption that $rad_p(G) > n - 5$, we have that $|N[S]| - 1 \leq \epsilon(S) < 5$, so $|N(S)| < 4$.
		If $S_c \neq \emptyset$, then $S = S_c$, and so $|T_c| = 2|S_c| = 4$ which contradicts $|N(S)| < 4$. 
		Thus $S_c = \emptyset$ and $S = S_i$.
		Note that each vertex of $S$ has at least one neighbour in $T_r$.
		If $|T_r| \geq 2$, then $|N(S)| \geq 4$.
		If $|T_r| = 1$, then the single vertex of $T_r$ is a power dominating set, contradicting the choice of $S$.
		
		\textit{Case 4:} $\gamma_p = 2$ and $\delta \geq 3$.
		Because $rad_p(G) > n - \delta - 2$, we have $|N[S]| - 1 \leq \epsilon(S) < \delta + 2$.
		Since $|S| = 2$, we get $|N(S)| \leq \delta$.
		Since $\gamma_p = 2$, either $S = S_c$ or $S = S_i$. 
		If $S = S_i$, then $|T_i| = 2$, and each vertex of $S_i$ has at least $\delta - 1$ neighbours in $T_r$. 
		Thus $|N(S)| \geq 2 + (\delta - 1) > \delta$, a contradiction.
		Thus $S = S_c$, and $|T_c| = 2|S_c| = 4$.
		Each vertex of $S_c$ has at least $\delta - 3$ neighbours in $T_r$, so $|N(S)| \geq 4 + (\delta - 3) > \delta$, a contradiction.
	\end{proof} 
	
	The bounds in Theorem \ref{thm:upper_bound_geq_2} are sharp when $\delta = 2$ or $\gamma_p = 2$. 
	To show the bound is sharp for $\delta = 2$ and $\gamma_p \geq 2$, consider the graph $D(k)$ formed as follows: Take a path $v_1, v_2, \dots, v_k$ and attach to each vertex $v_i$ a pair of adjacent vertices. 
	
	\begin{figure}[ht]
		\centering
		\begin{tikzpicture}[scale=0.7]
			\begin{pgfonlayer}{nodelayer}
				\node [style=blackvertex] (0) at (0, 0) {};
				\node [style=blackvertex] (1) at (-2, 0) {};
				\node [style=blackvertex] (2) at (-4, 0) {};
				\node [style=blackvertex] (3) at (2, 0) {};
				\node [style=blackvertex] (4) at (4, 0) {};
				\node [style=whitevertex] (5) at (-4.5, 1) {};
				\node [style=whitevertex] (6) at (-3.5, 1) {};
				\node [style=whitevertex] (7) at (-2.5, 1) {};
				\node [style=whitevertex] (8) at (-1.5, 1) {};
				\node [style=whitevertex] (9) at (-0.5, 1) {};
				\node [style=whitevertex] (10) at (0.5, 1) {};
				\node [style=whitevertex] (11) at (1.5, 1) {};
				\node [style=whitevertex] (12) at (2.5, 1) {};
				\node [style=whitevertex] (13) at (3.5, 1) {};
				\node [style=whitevertex] (14) at (4.5, 1) {};
			\end{pgfonlayer}
			\begin{pgfonlayer}{edgelayer}
				\draw (2) to (1);
				\draw (1) to (0);
				\draw (0) to (3);
				\draw (3) to (4);
				\draw (5) to (2);
				\draw (2) to (6);
				\draw (6) to (5);
				\draw (7) to (8);
				\draw (8) to (1);
				\draw (1) to (7);
				\draw (9) to (10);
				\draw (10) to (0);
				\draw (0) to (9);
				\draw (11) to (12);
				\draw (12) to (3);
				\draw (3) to (11);
				\draw (13) to (14);
				\draw (14) to (4);
				\draw (4) to (13);
			\end{pgfonlayer}
		\end{tikzpicture}
		
		\caption{The graph $D(5)$. The black vertices form a minimum power dominating set.}
		\label{fig:d_5}
	\end{figure}
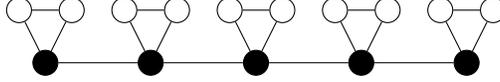
	
	The graph $D(k)$ has $3k$ vertices, minimum degree $2$, and the $k$ vertices $\{v_1, \dots, v_k\}$ are a minimum power dominating set. 
	Thus the power domination radius of $D(k)$ is $1 = 3k - k - 2k + 1$.
	
	To show the bound is sharp for $\gamma_p = 2$, we use the join to recursively construct a family of graphs $F(\delta)$. 
	
	\begin{lem}
		Let $G$ and $H$ be graphs with no isolated vertices such that $\gamma_p(G) \geq 2$ and $\gamma_p(H) \geq 2$.
		Denote $\delta(G) = \delta_G$, $\delta(H) = \delta_H$, and let $n_G$ and $n_H$ be the orders of $G$ and $H$, respectively. Then
		\begin{itemize}
			\item $\gamma_p(G+H) = 2$,
			\item $rad_p(G+H) = 1$,
			\item $G+H$ has order $n_G + n_H$,
			\item $\delta(G+H) = \min\{\delta_G + n_H, \delta_H + n_G\}$.
		\end{itemize}
		\label{lem:join_gamma_2}
	\end{lem}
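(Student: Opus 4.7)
The plan is to address the four conclusions in turn, noting that the order and minimum-degree claims follow immediately from the definition of the join. For the order, $V(G+H)$ is the disjoint union of $V(G)$ and $V(H)$, so $|V(G+H)| = n_G + n_H$. For the minimum degree, every vertex $u \in V(G)$ has $d_{G+H}(u) = d_G(u) + n_H$, since the join adjoins $u$ to every vertex of $V(H)$; minimising over $V(G)$ yields $\delta_G + n_H$, and by symmetry $V(H)$ contributes $\delta_H + n_G$, so $\delta(G+H)$ is the smaller of these two quantities.

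For the upper bounds $\gamma_p(G+H) \le 2$ and $rad_p(G+H) \le 1$, I will pick any $u \in V(G)$ and $v \in V(H)$---such vertices exist because the no-isolated-vertex hypothesis forces $n_G, n_H \ge 1$---and observe that in the join $u$ is adjacent to every vertex of $V(H)$ while $v$ is adjacent to every vertex of $V(G)$. Hence $N_{G+H}[\{u,v\}] = V(G+H)$, so $\{u,v\}$ is a PDS whose initial domination step already monitors the whole graph.

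The substantive part is the lower bound $\gamma_p(G+H) \ge 2$, from which $\gamma_p(G+H) = 2$ and $rad_p(G+H) = 1$ both follow. I will show that no singleton $\{w\}$ is a PDS of $G+H$; by symmetry, assume $w \in V(G)$. After the domination step the unmonitored set is $U = V(G) \setminus N_G[w]$, and the key structural observation is that as long as the current unmonitored set has size at least two, no vertex of $V(H)$ can propagate---each such vertex sees every unmonitored vertex of $V(G)$ as an unmonitored neighbour---whereas a monitored vertex of $V(G)$ propagates in $G+H$ precisely when it propagates in $G$, because all of its $V(H)$-neighbours are already monitored. Consequently the $G+H$-propagation from $\{w\}$ restricted to $V(G)$ mirrors the $G$-propagation from $\{w\}$ as long as the unmonitored set has at least two elements. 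Since $\gamma_p(G) \ge 2$, the $G$-propagation from $\{w\}$ halts with a nonempty residual set $U' \subseteq V(G)$, and the crucial point is to rule out $|U'| = 1$. If $U' = \{z\}$, then the no-isolated-vertex hypothesis supplies a neighbour $y$ of $z$ in $G$, necessarily monitored (as $z$ is the only unmonitored vertex), whose unique unmonitored neighbour is $z$---so $y$ would propagate, contradicting termination. Hence $|U'| \ge 2$, the $G+H$-propagation from $\{w\}$ likewise halts with at least two unmonitored vertices, and $\{w\}$ is not a PDS of $G+H$. The main obstacle is exactly this last deduction: without the no-isolated-vertex hypothesis, a single vertex of $V(H)$ could mop up a lone surviving vertex of $V(G)$ and collapse $\gamma_p(G+H)$ to $1$.
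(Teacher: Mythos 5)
Your proposal is correct and follows essentially the same route as the paper: both reduce propagation in $G+H$ from a set contained in $V(G)$ to propagation in $G$, using the observations that $V(H)$ is monitored after the domination step, that a vertex of $H$ can only propagate once a single unmonitored vertex of $G$ remains, and that the no-isolated-vertex hypothesis supplies a monitored $G$-neighbour of that last vertex. The only (immaterial) difference is presentational --- the paper proves the claim forwards by modifying the last force of a zero-forcing sequence to show that any PDS of $G+H$ inside $V(G)$ is a PDS of $G$, whereas you prove the contrapositive for singletons by tracking the monitored sets and arguing at the fixed point of the $G$-process; the upper bound via $\{u,v\}$ and the order and degree computations are identical.
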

	
	\begin{proof}
		We claim that if $S \subseteq V(G)$ is a power dominating set of $G+H$, then it is also a power dominating set of $G$. 
		Suppose $S$ is a power dominating set of $G+H$. 
		If $S$ dominates $G+H$, it also dominates $G$, so we may assume there are vertices of $G$ not adjacent to $S$.
		Let $u_1 \to v_1, u_2 \to v_2, \dots, u_k \to v_k$ be a forcing sequence for $N[S]$.
		As $V(H) \subset N[S]$, we have that $v_i \in V(G)$ for all $1 \leq i \leq k$. 
		Since $G$ has no isolated vertices, $v_k$ has some neighbour $w \in V(G)$. 
		Each vertex of $H$ is adjacent to every (unmonitored) vertex of $G$, so $u_i \in V(G)$ for every $i < k$. 
		Thus the sequence $u_1 \to v_1, u_2 \to v_2, \dots, u_{k-1} \to v_{k-1}, w \to v_k$ is a forcing sequence for $N[S]$ in $G$, which proves the claim.
		
		Note that no singleton $\{x\}$ containing a vertex of $G$ is a power dominating set of $G+H$: if $\{x\}$ is a power dominating set of $G+H$, then by the claim it is also a power dominating set in $G$, contradicting $\gamma_p(G) \geq 2$. 
		Similarly, no singleton of $H$ is a power dominating set of $G+H$, so $\gamma_p(G+H) \geq 2$.
		Let $u$ be a vertex of $G$ and $v$ a vertex of $H$. 
		It's clear that $\{u,v\}$ is a dominating set of $G+H$, so $\gamma(G+H) = \gamma_p(G+H) = 2$, and $rad_p(G+H) = 1$.
		The reader will easily check the order and minimum degree of $G+H$.
	\end{proof}
	
	We construct a family $F(\delta)$, $\delta \neq 2$ of graphs such that $F(\delta)$ has minimum degree $\delta$ and order $\delta + 3$. 
	Let $F(0) = \overline{K_3}$, $F(1) = 2K_2$, $F(3) = K_{3,3}$ and $F(6) = \overline{K_3} + K_{3,3}$.
	For all positive integers $\delta\neq 2$, define $F(\delta + 4) = (2K_2) + F(\delta)$.
	Note that there is no suitable candidate graph for $F(2)$, hence the sequence of graphs $F(\delta)$ with $\delta \equiv 2 \pmod 4$ begins with $F(6)$.
	
	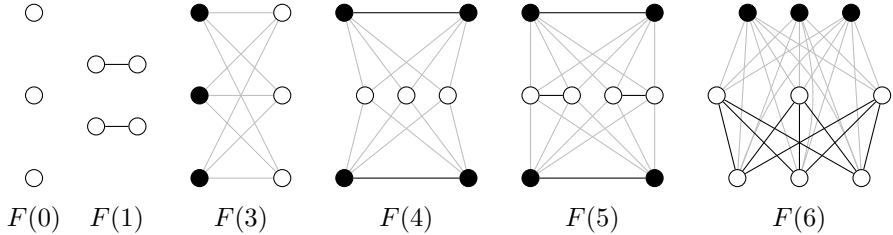
\begin{figure}[ht]
		\centering
		\begin{tikzpicture}[scale=0.55, inner sep=0.8mm]
			\begin{pgfonlayer}{nodelayer}
				\node [style=whitevertex] (0) at (-5, 2) {};
				\node [style=whitevertex] (1) at (-5, 0) {};
				\node [style=whitevertex] (2) at (-5, -2) {};
				\node [style=whitevertex] (3) at (-3.5, 0.75) {};
				\node [style=whitevertex] (4) at (-2.5, 0.75) {};
				\node [style=whitevertex] (5) at (-3.5, -0.75) {};
				\node [style=whitevertex] (6) at (-2.5, -0.75) {};
				\node [style=blackvertex] (7) at (-1, 2) {};
				\node [style=blackvertex] (8) at (-1, 0) {};
				\node [style=blackvertex] (9) at (-1, -2) {};
				\node [style=whitevertex] (10) at (1, 2) {};
				\node [style=whitevertex] (11) at (1, 0) {};
				\node [style=whitevertex] (12) at (1, -2) {};
				\node [style=blackvertex] (13) at (2.5, 2) {};
				\node [style=blackvertex] (14) at (5.5, 2) {};
				\node [style=blackvertex] (15) at (2.5, -2) {};
				\node [style=blackvertex] (16) at (5.5, -2) {};
				\node [style=whitevertex] (17) at (3, 0) {};
				\node [style=whitevertex] (18) at (4, 0) {};
				\node [style=whitevertex] (19) at (5, 0) {};
				\node [style=blackvertex] (20) at (7, 2) {};
				\node [style=blackvertex] (21) at (10, 2) {};
				\node [style=blackvertex] (22) at (7, -2) {};
				\node [style=blackvertex] (23) at (10, -2) {};
				\node [style=whitevertex] (24) at (7, 0) {};
				\node [style=whitevertex] (25) at (8, 0) {};
				\node [style=whitevertex] (26) at (10, 0) {};
				\node [style=whitevertex] (27) at (9, 0) {};
				\node [style=whitevertex] (28) at (12, -2) {};
				\node [style=whitevertex] (29) at (13.5, -2) {};
				\node [style=whitevertex] (30) at (15, -2) {};
				\node [style=whitevertex] (31) at (11.5, 0) {};
				\node [style=whitevertex] (32) at (13.5, 0) {};
				\node [style=whitevertex] (33) at (15.5, 0) {};
				\node [style=blackvertex] (34) at (13.5, 2) {};
				\node [style=blackvertex] (35) at (12.25, 2) {};
				\node [style=blackvertex] (36) at (14.75, 2) {};
				\node [style=none] (37) at (-5, -3) {$F(0)$};
				\node [style=none] (38) at (-3, -3) {$F(1)$};
				\node [style=none] (39) at (0, -3) {$F(3)$};
				\node [style=none] (40) at (4, -3) {$F(4)$};
				\node [style=none] (41) at (8.5, -3) {$F(5)$};
				\node [style=none] (42) at (13.5, -3) {$F(6)$};
			\end{pgfonlayer}
			\begin{pgfonlayer}{edgelayer}
				\draw (5) to (6);
				\draw (3) to (4);
				\draw [style=light] (7) to (10);
				\draw [style=light] (8) to (11);
				\draw [style=light] (9) to (12);
				\draw [style=light] (7) to (12);
				\draw [style=light] (10) to (9);
				\draw [style=light] (7) to (11);
				\draw [style=light] (8) to (10);
				\draw [style=light] (11) to (9);
				\draw [style=light] (8) to (12);
				\draw (13) to (14);
				\draw (15) to (16);
				\draw [style=light] (17) to (15);
				\draw [style=light] (17) to (16);
				\draw [style=light] (16) to (18);
				\draw [style=light] (18) to (15);
				\draw [style=light] (15) to (19);
				\draw [style=light] (19) to (16);
				\draw [style=light] (13) to (17);
				\draw [style=light] (17) to (14);
				\draw [style=light] (13) to (18);
				\draw [style=light] (18) to (14);
				\draw [style=light] (13) to (19);
				\draw [style=light] (19) to (14);
				\draw (20) to (21);
				\draw (22) to (23);
				\draw [style=light] (24) to (22);
				\draw [style=light] (24) to (23);
				\draw [style=light] (23) to (25);
				\draw [style=light] (25) to (22);
				\draw [style=light] (22) to (26);
				\draw [style=light] (26) to (23);
				\draw [style=light] (20) to (24);
				\draw [style=light] (24) to (21);
				\draw [style=light] (20) to (25);
				\draw [style=light] (25) to (21);
				\draw [style=light] (20) to (26);
				\draw [style=light] (26) to (21);
				\draw [style=light] (20) to (27);
				\draw [style=light] (27) to (21);
				\draw [style=light] (27) to (23);
				\draw [style=light] (27) to (22);
				\draw (27) to (26);
				\draw (25) to (24);
				\draw [style=light] (35) to (31);
				\draw [style=light] (35) to (33);
				\draw [style=light] (35) to (28);
				\draw [style=light] (35) to (32);
				\draw [style=light, bend left=15] (35) to (30);
				\draw [style=light] (34) to (28);
				\draw [style=light] (35) to (29);
				\draw [style=light] (34) to (31);
				\draw [style=light] (34) to (30);
				\draw [style=light] (34) to (33);
				\draw [style=light, bend left=15] (34) to (29);
				\draw [style=light] (34) to (32);
				\draw [style=light] (31) to (36);
				\draw [style=light, bend right=15] (36) to (28);
				\draw [style=light] (32) to (36);
				\draw [style=light] (36) to (29);
				\draw [style=light] (36) to (30);
				\draw [style=light] (36) to (33);
				\draw (29) to (32);
				\draw (29) to (33);
				\draw (29) to (31);
				\draw (31) to (28);
				\draw (28) to (32);
				\draw (28) to (33);
				\draw (30) to (32);
				\draw (30) to (33);
				\draw (30) to (31);
			\end{pgfonlayer}
		\end{tikzpicture}
		
		\caption{The graphs $F(0)$ through $F(6)$. Where the graph is constructed by a join of the form $G+H$, the vertices of $G$ are black, vertices of $H$ are white, and the edges between $G$ and $H$ are lighter for clarity.}
		\label{fig:f_sharp_examples}
	\end{figure}
	
	Using Lemma \ref{lem:join_gamma_2}, and checking $F(3), F(4)$ and $F(6)$ by hand, we see that for all $\delta \geq 3$, $F(\delta)$ has power domination number 2, power domination radius 1, minimum degree $\delta$ and $\delta + 3$ vertices.
	Thus $F(\delta)$ attains the bound $rad_p(F(\delta)) = (\delta + 3) - \delta - 2$ in Theorem \ref{thm:upper_bound_geq_2}.
	A similar family of sharpness examples can be constructed by repeatedly taking joins of $F(0), F(1)$ and $F(5)$ with $\overline{K_3}$.

	\section{Split Graphs}
	\label{sec:split_graphs}
	
	In this section, we give a sharp upper bound on the power domination radius of a split graph in terms of its order and power domination number. A \textit{split graph} is a graph in which the vertex set can be partitioned into an independent set and a set which induces a clique.
	Throughout this section, $G = (C \sqcup I, E)$ will denote a connected split graph with vertex set $V = C \sqcup I$ such that $C$ and $I$ are non-empty, $G[C]$ is a maximum clique of $G$, and $I$ is an independent set.
	Despite their simple structure, split graphs can have a very large power domination radius of almost $\frac{n}{2}$.
	Further, the (rather large) bound $rad_p(G) \leq \omega(G) = |C|$ is best possible without further constraints.
	The situation is less dire when $\gamma_p$ is large.
	We begin by showing that a split graph has an optimal PDS contained in its clique $C$.
	
	\begin{rem}
		\label{rem:split_i_vertex}
		If $v$ is a vertex of $I$, and $u$ is any neighbour of $v$, then $u\in C$ and $N[v] \subseteq N[u]$.
	\end{rem}
	
	\begin{lem}
		\label{lem:optimal_s_clique}
		Let $G = (C\cup I, E)$ be a connected split graph with $C$ and $I$ non-empty.
		Then $G$ contains an optimal PDS, $S$, such that $S\subseteq N(I) \subseteq C$.
	\end{lem}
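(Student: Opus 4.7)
My plan is to take any optimal PDS $S_0$ of $G$ and, in two successive rounds of local replacements, first force $S_0 \subseteq C$ and then force $S_0 \subseteq N(I)$. Both rounds rely on the monotonicity of propagation: if $N[S'] \supseteq N[S]$ and $|S'| \leq |S|$, then $P^i_G(S) \subseteq P^i_G(S')$ for every $i$, so $S'$ remains a PDS with $rad_p(G,S') \leq rad_p(G,S)$. The inclusion $N(I) \subseteq C$ is automatic, since $I$ is independent.

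For the first round, let $v \in S_0 \cap I$. Because $G$ is connected and nontrivial, $v$ has a neighbour, and by Remark \ref{rem:split_i_vertex} every neighbour $u$ of $v$ lies in $C$ and satisfies $N[v] \subseteq N[u]$. If some such $u$ lies outside $S_0$, then $S_0' = (S_0 \setminus \{v\}) \cup \{u\}$ has $|S_0'| = \gamma_p$ and $N[S_0'] \supseteq N[S_0]$, so $S_0'$ is again an optimal PDS with one fewer vertex in $I$. If instead every neighbour of $v$ already lies in $S_0$, then $N[v] \subseteq N[S_0 \setminus \{v\}]$ and hence $N[S_0 \setminus \{v\}] = N[S_0]$, so $S_0 \setminus \{v\}$ is a PDS of size $\gamma_p - 1$, contradicting minimality of $\gamma_p$. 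Iterating yields an optimal PDS $S_1 \subseteq C$.

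For the second round, suppose $u \in S_1$ with $u \notin N(I)$. Then $N(u) \cap I = \emptyset$ and, since $G[C]$ is a clique and $u \in C$, the vertex $u$ is adjacent to every other vertex of $C$; together these yield $N[u] = C$. When $|S_1| \geq 2$, any $w \in S_1 \setminus \{u\}$ has $N[w] \supseteq C = N[u]$, so $N[S_1 \setminus \{u\}] = N[S_1]$ and $S_1 \setminus \{u\}$ is a PDS of size $\gamma_p - 1$, a contradiction. When $|S_1| = 1$, we have $\gamma_p = 1$; any $u' \in N(I)$ (non-empty since $I \neq \emptyset$ and $G$ is connected) lies in $C$, so $N[u'] \supseteq C = N[u]$ and $\{u'\}$ is an optimal PDS contained in $N(I)$.

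The most delicate point is the $\gamma_p = 1$ branch of the second round, where the minimality-based shrinking argument used everywhere else is unavailable. The clique structure of $C$ rescues it: every vertex of $C$ has closed neighbourhood containing $C$, so swapping the singleton for any vertex of $N(I)$ preserves (indeed enlarges) the initial monitored set, and monotonicity of propagation finishes the job.
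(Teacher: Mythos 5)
Your proof is correct and takes essentially the same route as the paper: both rest on the same local exchanges, namely swapping an $S$-vertex $v\in I$ for a clique neighbour $u$ with $N[v]\subseteq N[u]$ (Remark \ref{rem:split_i_vertex}), and handling a vertex $u\in C\setminus N(I)$ (for which $N[u]=C$) either by deleting it or swapping it for a vertex of $N(I)$, with monotonicity of propagation guaranteeing the radius does not increase. The only difference is organisational: you perform the replacements iteratively in two rounds, while the paper packages the same exchanges as a minimal-counterexample argument minimising $|T\setminus N(I)|$.
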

	
	\begin{proof}
		Assume to the contrary that $G$ does not contain such an optimal PDS. 
		Among all optimal PDS's of $G$, let $T$ be the one minimising the cardinality $|T \setminus N(I)| > 0$.
		As $T$ is not contained in $N(I)$, there exists a vertex $v$ of $T$ such that either $v\in I$, or $v\in C - N(I)$.
		
		If $v\in I$, then by Remark \ref{rem:split_i_vertex}, $v$ has a neighbour $u$ in $C$ such that $N(v) \subseteq N(u)$.
		The set $(T \setminus \{v\}) \cup \{u\}$ is an optimal PDS (since exchanging $v$ for its neighbour $u$ cannot increase the power domination radius), contradicting the minimality of $T$.
		
		If $v \in C \setminus N(I)$, there are two cases to consider. 
		If there is a vertex of $T\cap C$ other than $v$, then $T \setminus \{v\}$ is a PDS, contradicting the choice of $T$.
		If $v$ is the only vertex of $T\cap C$, then $v$ has a neighbour $w\in C \cap N(I)$.
		Thus $(T \setminus \{v\}) \cup \{w\}$ is an optimal PDS, contradicting the choice of $T$.
	\end{proof}
	
	\begin{prop}
		\label{prop:split_clique_bound}
		Let $G$ be a split graph.
		Then $rad_p(G) \leq \omega(G) - \gamma_p(G) + 1$.
	\end{prop}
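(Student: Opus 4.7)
The plan is to apply Lemma \ref{lem:optimal_s_clique} to fix an optimal PDS $S\subseteq C$ with $|S|=\gamma_p(G)$, and then count how many vertices of $C$ can possibly act as propagators as the process unfolds. Write $k=rad_p(G,S)=rad_p(G)$ and let $P^1,\dots,P^k$ be the associated propagation sequence.

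The first observation is that, because $S$ is contained in the clique $C$, every vertex of $C$ is either in $S$ or adjacent to a vertex of $S$, so $C\subseteq N[S]=P^1$. Hence the only vertices that can still be monitored during the propagation phase lie in $I\setminus N(S)$.

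The second, and key, observation is that only vertices of $C\setminus S$ can ever propagate. This splits into two small verifications: (i) if $v\in I$, then since $I$ is independent all neighbours of $v$ lie in $C\subseteq P^1$, so $v$ never has an unmonitored neighbour; and (ii) if $s\in S$, then $N[s]\subseteq N[S]=P^1$, so $s$ also never has an unmonitored neighbour at any step. Moreover, each $c\in C\setminus S$ can propagate at most once, because once its unique unmonitored neighbour joins the monitored set, $c$ has no unmonitored neighbour thereafter.

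Combining these, every vertex $u\in I\setminus N(S)$ must be propagated by some $c\in C\setminus S$, and distinct $u$'s are propagated by distinct $c$'s, so $|I\setminus N(S)|\leq |C\setminus S|=\omega(G)-\gamma_p(G)$. Since at least one new vertex is monitored at each step $i\geq 2$, the number of propagation steps satisfies $k-1\leq |I\setminus N(S)|$, which yields $rad_p(G)\leq \omega(G)-\gamma_p(G)+1$. There is no significant obstacle beyond invoking Lemma \ref{lem:optimal_s_clique}: the split structure forces every propagation ``chain'' to have length at most one link from $C\setminus S$ into $I$, which reduces the problem to a direct counting argument.
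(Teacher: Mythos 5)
Your proposal is correct and follows essentially the same route as the paper: invoke Lemma \ref{lem:optimal_s_clique} to place an optimal PDS $S$ inside the clique, observe that $C \subseteq N[S] = P^1$ so that only vertices of $C \setminus S$ can ever propagate (each at most once), and bound the number of propagation steps by $|C \setminus S| = \omega(G) - \gamma_p(G)$. Your extra intermediate count through $|I \setminus N(S)|$ is just a slightly more explicit version of the paper's step-counting and changes nothing essential.
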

	
	\begin{proof}
		Per Lemma \ref{lem:optimal_s_clique}, $G$ has an optimal PDS $S$ that is contained in $N(I)$.
		Note that $N(I) \subseteq C \subseteq N[S]$.
		Since the vertices adjacent to $I$ are already monitored in the first step, no vertex of $I$ will ever propagate in the propagation sequence derived from $S$.
		Thus the only vertices that propagate after the dominating step are those of $C \setminus S$. 
		So the propagation radius is at most $1 + |C \setminus S| = 1 + \omega(G) - \gamma_p(G)$.
	\end{proof}
	
	\begin{thm}
		\label{thm:split_graph}
		Suppose $G$ is a split graph of order $n$ and power domination number $\gamma_p$.
		Then
		\[
		rad_p(G) \leq \frac{n - 3\gamma_p}{2} + 1.
		\]
	\end{thm}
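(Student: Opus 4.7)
The plan is to combine two complementary upper bounds on $rad_p(G)$ via a case analysis driven by whether $|I|$ or $|C|$ dominates. First, by Lemma \ref{lem:optimal_s_clique} I would fix an optimal PDS $S \subseteq N(I) \subseteq C$ with $|S| = \gamma_p$. Since $S \subseteq C$ and $C$ induces a clique, when $\gamma_p \geq 2$ every pair of vertices in $S$ is adjacent, so $S = S_c$ in the notation of Section \ref{sec:upp_gam_2}.

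The structural heart of the argument is to establish $|N(S) \cap I| \geq 2\gamma_p$. Because every vertex of $C \setminus S$ is joined to all of $S$ via clique edges, no such vertex can be an external $S$-private neighbour, so every external $S$-private neighbour of any $u \in S$ must lie in $I$. I would then adapt Claim~2 in the proof of Theorem \ref{thm:upper_bound_geq_2}: if some $u \in S$ had at most one external $S$-private neighbour $w \in I$, then $S \setminus \{u\}$ still dominates $N[S] \setminus \{w\}$ (the vertex $u$ remains dominated via any clique edge to another element of $S$, and every remaining neighbour of $u$ is either in $S$ or joined to some $s \in S \setminus \{u\}$). Moreover, $u$'s only unmonitored neighbour in $P^1(S \setminus \{u\})$ is $w$ itself, so in the next step $u$ propagates to $w$, placing $N[S] = P^1(S)$ inside $P^2(S \setminus \{u\})$. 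By monotonicity of the propagation process this forces $P^\infty(S \setminus \{u\}) \supseteq P^\infty(S) = V(G)$, making $S \setminus \{u\}$ a PDS and contradicting the minimality of $|S|$. Since the external private-neighbour sets of distinct vertices of $S$ are disjoint by definition, summing yields $|N(S) \cap I| \geq 2\gamma_p$.

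Two upper bounds on $rad_p(G)$ are then immediate. Proposition \ref{prop:split_clique_bound} gives the first, namely $rad_p(G) \leq |C| - \gamma_p + 1$. For the second, observe that after the initial domination step $C \cup (N(S) \cap I) \subseteq P^1$, so the only unmonitored vertices lie in $I \setminus N(S)$. Every vertex of $I$ has all its neighbours in $C$, which is already monitored, so no vertex of $I$ can ever propagate; hence each propagation step after the first must monitor at least one new vertex from $I \setminus N(S)$. This yields $rad_p(G) \leq 1 + |I| - |N(S) \cap I| \leq 1 + |I| - 2\gamma_p$.

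A short case analysis then finishes the proof. If $|I| \geq |C| + \gamma_p$, then $|C| \leq (n - \gamma_p)/2$, and the first bound gives $rad_p(G) \leq |C| - \gamma_p + 1 \leq (n - 3\gamma_p)/2 + 1$. Otherwise $|I| \leq |C| + \gamma_p$, which rearranges to $|I| \leq (n + \gamma_p)/2$, and the second bound gives $rad_p(G) \leq 1 + |I| - 2\gamma_p \leq (n - 3\gamma_p)/2 + 1$. The main obstacle I expect to encounter is the private-neighbour claim: one must carefully verify that removing $u$ from $S$ loses only $w$ from the monitored set after one step, that $u$'s sole remaining unmonitored neighbour is exactly this $w$ (forced by privacy plus the clique structure of $C$), and then invoke monotonicity to close the loop on $S \setminus \{u\}$ being a PDS.
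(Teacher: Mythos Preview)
Your argument is essentially the paper's, repackaged: the paper combines the same two observations --- that after step~1 only vertices of $C\setminus S$ can propagate and only vertices of $I\setminus N(S)$ remain to be monitored --- into a single count
\[
n \;\geq\; |S| + |N(S)\cap I| + |C\setminus S| + |I\setminus N(S)| \;\geq\; \gamma_p + 2\gamma_p + (rad_p - 1) + (rad_p - 1),
\]
whereas you split this into two separate bounds and a case analysis on $|I|$ versus $|C|$. (As an aside, simply adding your two inequalities gives $2\,rad_p \leq (|C|-\gamma_p+1) + (|I|-2\gamma_p+1) = n - 3\gamma_p + 2$ directly, so the case split is unnecessary.)

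There is one genuine gap. Your private-neighbour step --- showing that if $u\in S$ has at most one external $S$-private neighbour in $I$ then $S\setminus\{u\}$ is still a PDS --- requires $S\setminus\{u\}$ to be nonempty and to contain a clique-neighbour of $u$; this breaks down when $\gamma_p = 1$, and you never return to that case. The paper closes this hole by strengthening the choice of $S$: among all optimal PDS's contained in $N(I)$, it picks one with $|N[S]|$ maximum, and it first disposes of the subcase in which every vertex of $C$ has at most one neighbour in $I$ (where $rad_p \leq 2$). Once some clique vertex has two $I$-neighbours, maximality of $|N[S]|$ forces the single vertex of $S$ to be such a vertex, so $|N(S)\cap I|\geq 2 = 2\gamma_p$ holds even when $|S|=1$, and the rest of your argument goes through unchanged.
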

	
	\begin{proof}
		Let the vertex set of $G$ be $V = C\sqcup I$.
		Applying Lemma \ref{lem:optimal_s_clique}, we see that $G$ has an optimal PDS $S \subseteq N(I) \subseteq C$.
		In particular, the vertices of $S$ are all adjacent.
		Further, among all optimal PDS's contained in $N(I)$, choose $S$ such that $|N[S]|$ is maximum.
		Note that if each vertex of $C$ has at most one neighbour in $I$, then every vertex of $C$ with such a neighbour can propagate after the first step.
		Thus we have $rad_p(G) \leq 2$.
		Therefore we may assume without loss of generality that some vertex of $C$ has at least two neighbours in $I$.
		
		We further claim that each vertex of $S$ has at least two external private neighbours in $I$. 
		There are two cases to consider based on the cardinality $|S|$.
		If $S$ has exactly one vertex, then by maximality of $S$, that one vertex in $S$ must be one with at least two (necessarily external private) neighbours in $I$.
		Suppose now that $|S| \geq 2$.
		If $u\in S$ has at most one external private neighbour, then $S \setminus \{u\}$ is a PDS, contradicting that $S$ is optimal and proving the claim.
		By this claim, we have that $|N(S) \cap I| \geq 2|S| = 2\gamma_p$.
		
		Note that in each step of the propagation sequence after the first, a vertex of $C - S$ propagates to a vertex of $I \setminus N(S)$.
		There are $rad_p(G) - 1$ such steps.
		We now count the number of vertices $n$ of $G$.
		
		\begin{align*}
			n &\geq |S| + |N(S) \cap I| + |C\setminus S| + |I \setminus N(S)|\\
			n &\geq \gamma_p + 2\gamma_p + (rad_p(G) - 1) + (rad_p(G) - 1)
		\end{align*}
		
		Rearranging the last equation completes the proof.
	\end{proof}
	
	\begin{figure}[ht]
		\centering
		\begin{tikzpicture}
			\begin{pgfonlayer}{nodelayer}
				\node [style=blackvertex] (0) at (-3, 0) {};
				\node [style=blackvertex] (1) at (-2, 1) {};
				\node [style=whitevertex] (2) at (-0.5, 1.5) {};
				\node [style=whitevertex] (3) at (1, 1) {};
				\node [style=whitevertex] (4) at (2, 0) {};
				\node [style=whitevertex, label={above:$x_1$}] (5) at (-4.5, 3) {};
				\node [style=whitevertex, label={above:$y_1$}] (6) at (-3.5, 3) {};
				\node [style=whitevertex, label={above:$x_2$}] (7) at (-2.5, 3) {};
				\node [style=whitevertex, label={above:$y_2$}] (8) at (-1.5, 3) {};
				\node [style=whitevertex, label={above:$z_1$}] (9) at (0, 3) {};
				\node [style=whitevertex, label={above:$z_2$}] (10) at (1.5, 3) {};
				\node [style=whitevertex, label={above:$z_3$}] (11) at (3, 3) {};
			\end{pgfonlayer}
			\begin{pgfonlayer}{edgelayer}
				\draw (0) to (2);
				\draw (1) to (2);
				\draw (1) to (0);
				\draw (0) to (4);
				\draw (3) to (4);
				\draw (2) to (3);
				\draw (1) to (3);
				\draw (2) to (4);
				\draw (0) to (3);
				\draw (1) to (4);
				\draw (5) to (0);
				\draw (6) to (0);
				\draw (7) to (1);
				\draw (1) to (8);
				\draw (2) to (9);
				\draw (9) to (3);
				\draw (3) to (10);
				\draw (10) to (4);
				\draw (4) to (11);
			\end{pgfonlayer}
		\end{tikzpicture}
		
		\caption{The graph $S(4,2)$ shown is a split graph with power domination number 2 and power domination radius 4. The vertices $s_1$ and $s_2$ of an optimal PDS are bolded.}
		\label{fig:split_graph_sharp}
	\end{figure}
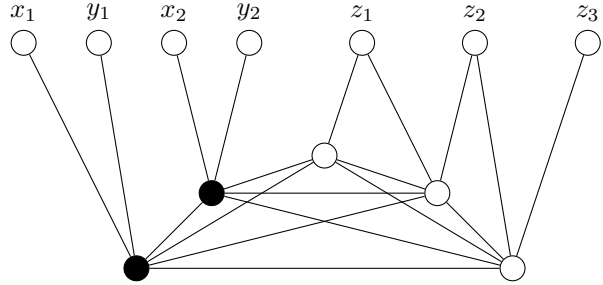
	
	The bound in Theorem \ref{thm:split_graph} is sharp for all combinations of $rad_p$ and $\gamma_p$.
	For positive integers $p$ and $g$, we construct a split graph $S(p, g) = (V,E)$ as follows.
	Let $V$ be the disjoint union of five sets $S, T, X, Y, Z$ with $S = \{s_1, \dots, s_g\}$, $X = \{x_1, \dots, x_g\}$, $Y = \{y_1, \dots, y_g\}$, $T = \{t_1, \dots, t_{p-1}\}$ and $Z = \{z_1, \dots, z_{p-1}\}$.
	The edge set $E$ contains: all edges of the form $uv$ where both $u$ and $v$ belong to $S\cup T$; all edges $s_ix_i$; all edges $s_iy_i$; all edges $t_iz_i$ and all edges $t_{i-1}z_i$ where $i\geq 2$.
	The resulting graph $S(p,g)$ is a split graph with clique $S\cup T$ and independent set $X\cup Y \cup Z$ (See Figure \ref{fig:split_graph_sharp}).
	It has power domination radius $p$, power domination number $g$ and order $n = 3g + 2p - 2$, so it attains the bound in Theorem \ref{thm:split_graph}.

	\section{The sharp lower bound}
	\label{sec:low_bound}
	
	In \cite{pdbtc_liao_2016}, Liao determines a lower bound on the power domination number in terms of the power domination radius:
	
	\begin{thm}\textup{\cite{pdbtc_liao_2016}}
		Let $G$ be a connected graph with order $n$, maximum degree $\Delta$ and power domination radius $rad_p$. Then
		\[
		\gamma_p(G) \geq \frac{n}{rad_p\cdot \Delta + 1}
		\]
		\label{thm:lower_bound_g}
	\end{thm}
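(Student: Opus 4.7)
The plan is to count the number of vertices monitored at each step of an optimal propagation sequence, controlling the growth by $\Delta$ and $\gamma_p$. Let $S$ be an optimal PDS with $|S|=\gamma_p$ and $rad_p(G,S)=rad_p$, and let $P^0=\emptyset, P^1=N[S], \dots, P^{rad_p}=V(G)$ be the associated propagation sequence with live sets $L^1, L^2, \dots$.

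First I would bound the size of $P^1$: since $|S|=\gamma_p$ and each vertex of $S$ has at most $\Delta$ neighbours, one gets $|P^1|=|N[S]| \leq \gamma_p(\Delta+1)$. Simultaneously, $|L^1|=|N(S)\setminus S| \leq \gamma_p \Delta$, because each of the $\gamma_p$ vertices of $S$ contributes at most $\Delta$ vertices to $N(S)\setminus S$.

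Next, for each $i\geq 1$ I would bound $|P^{i+1}\setminus P^i|$. By Definition \ref{def_powerdomination}, the new vertices at step $i+1$ are contained in $N[V^i]\setminus P^i$, where each $v\in V^i$ has at most one neighbour outside $P^i$. Since any vertex which has already propagated has no unmonitored neighbour, we have $V^i \subseteq L^i$, and each vertex of $V^i$ contributes at most one new vertex. Hence $|P^{i+1}\setminus P^i| \leq |L^i|$. The sequence $|L^1|,|L^2|,\dots$ is non-increasing (Definition \ref{def_live_set}), so $|L^i|\leq |L^1|\leq \gamma_p\Delta$ for every $i\geq 1$.

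Combining these estimates,
\[
n = |P^{rad_p}| = |P^1| + \sum_{i=2}^{rad_p}|P^i \setminus P^{i-1}| \leq \gamma_p(\Delta+1) + (rad_p - 1)\gamma_p\Delta = \gamma_p(rad_p\cdot \Delta + 1),
\]
which rearranges to the claimed inequality. There is no real obstacle: the argument rests entirely on the two observations that $|L^1|\leq \gamma_p\Delta$ and that $|L^i|$ is non-increasing, both of which are either immediate from the definitions or recorded in Definition \ref{def_live_set}. The only subtle point to flag in the write-up is the justification of $|P^{i+1}\setminus P^i|\leq |L^i|$, which follows from the fact that already-propagated vertices contribute nothing new because they have no unmonitored neighbours.
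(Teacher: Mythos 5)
The paper does not actually prove Theorem \ref{thm:lower_bound_g}; it is quoted from Liao \cite{pdbtc_liao_2016} without proof, so there is no internal argument to compare against. Your proof is a correct, self-contained justification, and it is essentially the standard counting argument: $|N[S]| \leq \gamma_p(\Delta+1)$, $|L^1| \leq \gamma_p\Delta$, monotonicity of $|L^i|$, and $|P^{i+1}\setminus P^i| \leq |L^i|$ give $n \leq \gamma_p(\Delta+1) + (rad_p-1)\gamma_p\Delta = \gamma_p(rad_p\cdot\Delta+1)$, which rearranges to the claim. One small imprecision: the inclusion $V^i \subseteq L^i$ is not literally true, since $V^i$ also contains monitored vertices with \emph{no} unmonitored neighbour (in particular every vertex of $S$ and every vertex that has already propagated), and those are not in the live set. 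This does not damage the proof, because such vertices contribute nothing to $N[V^i]\setminus P^i$; the inequality you actually need, $|P^{i+1}\setminus P^i| \leq |L^i|$, holds because each newly monitored vertex is the unique unmonitored neighbour of some vertex that has exactly one unmonitored neighbour, and any such vertex has not previously propagated and lies outside $S$, hence belongs to $L^i$ and accounts for at most one new vertex. You also implicitly use that there is a PDS of cardinality $\gamma_p$ realising the radius $rad_p$ (an optimal PDS in the paper's terminology), which is the intended reading of the definition of $rad_p(G)$ and is how the paper uses it throughout.
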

	
	This can be easily re-arranged to give a lower bound for the power domination radius:
	
	\begin{cor}
		Let $G$ be a connected graph of order $n$, maximum degree $\Delta$ and power domination number $\gamma_p$. Then
		\[
		rad_p(G) \geq \frac{n-\gamma_p}{\gamma_p\cdot \Delta}
		\]
		\label{cor:lower_bound_p}
	\end{cor}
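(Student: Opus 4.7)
The plan is to derive Corollary \ref{cor:lower_bound_p} directly from Theorem \ref{thm:lower_bound_g} by purely algebraic manipulation of Liao's inequality $\gamma_p(G) \geq n/(rad_p \cdot \Delta + 1)$. Since $G$ is connected of order $n \geq 2$, we have $\gamma_p \geq 1$ and $\Delta \geq 1$, so every quantity appearing in the inequality is strictly positive and multiplication or division by any of them preserves the direction of the inequality.

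First, I would multiply both sides of Liao's inequality by the positive quantity $rad_p \cdot \Delta + 1$ to obtain $\gamma_p \cdot (rad_p \cdot \Delta + 1) \geq n$. Distributing yields $\gamma_p \cdot rad_p \cdot \Delta + \gamma_p \geq n$, and subtracting $\gamma_p$ from both sides gives $\gamma_p \cdot rad_p \cdot \Delta \geq n - \gamma_p$. Dividing through by the positive quantity $\gamma_p \cdot \Delta$ produces exactly the claimed inequality $rad_p(G) \geq (n - \gamma_p)/(\gamma_p \cdot \Delta)$.

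There is no real obstacle here; the only point worth noting is that the rearrangement requires $\gamma_p \cdot \Delta \neq 0$, which is guaranteed for any connected graph with at least one edge. The degenerate case $G = K_1$ is excluded by the connectedness and implicit nontriviality assumption, and in any event the statement of the corollary is vacuous when $\Delta = 0$, so no separate argument is required.
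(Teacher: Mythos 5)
Your proposal is correct and matches the paper's approach exactly: the paper also obtains the corollary by simply rearranging Liao's inequality $\gamma_p \geq n/(rad_p\cdot\Delta + 1)$, which it describes as an easy rearrangement without further argument. Your care about positivity of $\gamma_p\cdot\Delta$ is fine but not an issue for connected graphs on at least two vertices.
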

	
	In \cite{pdbtc_liao_2016}, Theorem \ref{thm:lower_bound_g} and Corollary \ref{cor:lower_bound_p} are shown to be sharp for all possible values of $rad_p$ and $\gamma_p$ respectively.
	However, $\Delta = 4$ in all of the extremal examples. 
	We show that for all $\Delta \geq 3$, $rad_p\geq 1$ and $\gamma_p \geq 1$, there is a graph $H(\Delta, \gamma_p, rad_p)$ with maximum degree $\Delta$, power domination number $\gamma_p$, power domination radius $rad_p$ and order $n = \gamma_p(rad_p\cdot \Delta + 1)$.
	
	To construct $H(\Delta, \gamma_p, rad_p)$, begin with $\gamma_p$ disjoint copies of the star $K_{1,\Delta}$. 
	Replace every edge with a path of length $rad_p$.
	For $i\in [1, \gamma_p]$, let $u_i$ denote the vertex of degree $\Delta$ in the $i^{th}$ subdivided star, and let $v_i$ denote some leaf of the $i^{th}$ subdivided star.
	Add all possible edges of the form $v_iv_{i+1}$ to complete the construction (see Figure \ref{fig:h_5_4_2}).
	
	\begin{figure}[ht]
		\centering
		\begin{tikzpicture}[scale=0.6]
			\begin{pgfonlayer}{nodelayer}
				\node [style=whitevertex, label={below:$v_2$}] (0) at (-2, 0) {};
				\node [style=blackvertex, label={above:$u_2$}] (1) at (-2, 3.5) {};
				\node [style=whitevertex] (2) at (-3.25, 4.5) {};
				\node [style=whitevertex] (3) at (-3.25, 5.75) {};
				\node [style=whitevertex] (4) at (-0.75, 4.5) {};
				\node [style=whitevertex] (5) at (-0.75, 5.75) {};
				\node [style=whitevertex] (7) at (-2, 1.75) {};
				\node [style=whitevertex] (8) at (-0.75, 2.5) {};
				\node [style=whitevertex] (9) at (-0.75, 1) {};
				\node [style=whitevertex] (10) at (-3.25, 2.5) {};
				\node [style=whitevertex] (11) at (-3.25, 1) {};
				\node [style=whitevertex] (12) at (-3.25, 1) {};
				\node [style=whitevertex, label={below:$v_1$}] (13) at (-6, 0) {};
				\node [style=blackvertex, label={above:$u_1$}] (14) at (-6, 3.5) {};
				\node [style=whitevertex] (15) at (-7.25, 4.5) {};
				\node [style=whitevertex] (16) at (-7.25, 5.75) {};
				\node [style=whitevertex] (17) at (-4.75, 4.5) {};
				\node [style=whitevertex] (18) at (-4.75, 5.75) {};
				\node [style=whitevertex] (19) at (-6, 1.75) {};
				\node [style=whitevertex] (20) at (-4.75, 2.5) {};
				\node [style=whitevertex] (21) at (-4.75, 1) {};
				\node [style=whitevertex] (22) at (-7.25, 2.5) {};
				\node [style=whitevertex] (23) at (-7.25, 1) {};
				\node [style=whitevertex] (24) at (-7.25, 1) {};
				\node [style=whitevertex, label={below:$v_3$}] (25) at (2, 0) {};
				\node [style=blackvertex, label={above:$u_3$}] (26) at (2, 3.5) {};
				\node [style=whitevertex] (27) at (0.75, 4.5) {};
				\node [style=whitevertex] (28) at (0.75, 5.75) {};
				\node [style=whitevertex] (29) at (3.25, 4.5) {};
				\node [style=whitevertex] (30) at (3.25, 5.75) {};
				\node [style=whitevertex] (31) at (2, 1.75) {};
				\node [style=whitevertex] (32) at (3.25, 2.5) {};
				\node [style=whitevertex] (33) at (3.25, 1) {};
				\node [style=whitevertex] (34) at (0.75, 2.5) {};
				\node [style=whitevertex] (35) at (0.75, 1) {};
				\node [style=whitevertex] (36) at (0.75, 1) {};
				\node [style=whitevertex, label={below:$v_4$}] (37) at (6, 0) {};
				\node [style=blackvertex, label={above:$u_4$}] (38) at (6, 3.5) {};
				\node [style=whitevertex] (39) at (4.75, 4.5) {};
				\node [style=whitevertex] (40) at (4.75, 5.75) {};
				\node [style=whitevertex] (41) at (7.25, 4.5) {};
				\node [style=whitevertex] (42) at (7.25, 5.75) {};
				\node [style=whitevertex] (43) at (6, 1.75) {};
				\node [style=whitevertex] (44) at (7.25, 2.5) {};
				\node [style=whitevertex] (45) at (7.25, 1) {};
				\node [style=whitevertex] (46) at (4.75, 2.5) {};
				\node [style=whitevertex] (47) at (4.75, 1) {};
				\node [style=whitevertex] (48) at (4.75, 1) {};
			\end{pgfonlayer}
			\begin{pgfonlayer}{edgelayer}
				\draw (3) to (2);
				\draw (2) to (1);
				\draw (4) to (1);
				\draw (4) to (5);
				\draw (1) to (10);
				\draw (10) to (12);
				\draw (1) to (7);
				\draw (7) to (0);
				\draw (1) to (8);
				\draw (8) to (9);
				\draw (16) to (15);
				\draw (15) to (14);
				\draw (17) to (14);
				\draw (17) to (18);
				\draw (14) to (22);
				\draw (22) to (24);
				\draw (14) to (19);
				\draw (19) to (13);
				\draw (14) to (20);
				\draw (20) to (21);
				\draw (28) to (27);
				\draw (27) to (26);
				\draw (29) to (26);
				\draw (29) to (30);
				\draw (26) to (34);
				\draw (34) to (36);
				\draw (26) to (31);
				\draw (31) to (25);
				\draw (26) to (32);
				\draw (32) to (33);
				\draw (40) to (39);
				\draw (39) to (38);
				\draw (41) to (38);
				\draw (41) to (42);
				\draw (38) to (46);
				\draw (46) to (48);
				\draw (38) to (43);
				\draw (43) to (37);
				\draw (38) to (44);
				\draw (44) to (45);
				\draw (13) to (0);
				\draw (0) to (25);
				\draw (25) to (37);
			\end{pgfonlayer}
		\end{tikzpicture}
		
		\caption{The graph $H(5,4,2)$ has order $n = 44 = \gamma_p(rad_p\cdot\Delta + 1).$}
		\label{fig:h_5_4_2}
	\end{figure}
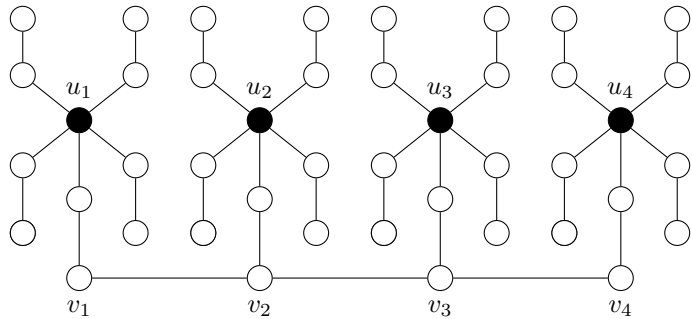
	
	It's clear that the set $\{u_1, u_2, \dots, u_{\gamma_p}\}$ is an optimal power dominating set with $\gamma_p$ vertices and radius $rad_p$.
	
	\section{Conclusion and further questions}
	
	In section \ref{sec:upp_gam_1}, we presented a sharp upper bound on the power domination radius of a graph, $rad_p$, with given order $n$ and minimum degree $\delta$. 
	This bound is sharp for all $\delta \geq 2$, but only for graphs with $\gamma_p = 1$.
	For all $\delta \geq 2$, the bound can be attained by a regular graph (section \ref{sec:sharp_reg}). 
	In section \ref{sec:upp_gam_2}, we give a bound on the power domination radius in terms of $n$, $\delta$ and $\gamma_p$. 
	The bound is sharp whenever either $\gamma_p = 2$ or $\delta = 2$, however we do not know if this bound is best possible in other cases.
	Section \ref{sec:split_graphs} presents a sharp upper bound on $rad_p$ for split graphs in terms of $\gamma_p$ and $n$.
	We conclude in section \ref{sec:low_bound} by showing that a result of Liao \cite{pdbtc_liao_2016} yields a sharp lower bound on $rad_p$ in terms of $n$, $\Delta$ and $\gamma_p$.
	This leaves the following questions: 
	\begin{enumerate}
		\item Can the bound in Theorem \ref{thm:upper_bound_geq_2} be improved when $\delta \geq 3$ and $\gamma_p \geq 3$?
		If not, can we construct a family of graphs attaining the bound for all combinations of $\delta \geq 3$ and $\gamma_p \geq 3$?
		
		\item Can the upper and lower bounds on $rad_p$ in Theorems \ref{thm:delta_1_up_bound}, \ref{thm:upper_bound_geq_2} and \ref{thm:lower_bound_g} be improved for interesting classes of graphs?
	\end{enumerate}

	\section*{Statements and Declarations}
	
	This research is supported in part by the DSI-NRF Centre of Excellence in Mathematical and Statistical Sciences (CoE-MaSS), South Africa. Opinions expressed and conclusions arrived at are those of the author and are not necessarily to be attributed to the CoE-MaSS.
	

\end{document}